\begin{document}
\newtheorem{thm}{Theorem}
\newtheorem{defn}{Definition}
\newtheorem{cor}{Corollary}
\newtheorem{eg}{Example}
\newtheorem{lem}{Lemma}
\def\ni{\noindent}
\def\vs{\vspace}
\title{Stability analysis of improved Two-level orthogonal Arnoldi procedure}

\author{%
{\sc
Mashetti Ravibabu\thanks{Corresponding author. Email: mashettiravibabu2@gmail.com,mashettir@iisc.ac.in}
} \\[2pt]
Department of Computational and Data Sciences, Indian Institute of Science,
Bengaluru, India, 560012. 
}
\shortauthorlist{Mashetti Ravibabu}

\maketitle

\begin{abstract}
{ The SOAR method for computing an orthonormal basis of a second-order Krylov subspace can be numerically unstable (see \cite{stabtoar}). 
In the Two-level orthogonal Arnoldi(TOAR) procedure, an alternative to SOAR, the problem of instability had circumvented. 
A stability analysis of the second-order Krylov subspace's orthonormal basis in TOAR with respect to the coefficient matrices of a quadratic problem remain open; see \cite{stabtoar}. This paper proposes the Improved-TOAR method(I-TOAR) and solves the said open problem for I-TOAR.}
%
{Second-order Krylov subspace, Second-order Arnoldi procedure, Backward Stability,
Model order reduction, Dynamical systems.}
\end{abstract}

\section{Introduction}
\label{sec;introduction}

Large-scale quadratic problems are ubiquitous in Linear stability analysis, Model order reduction, Dissipative acoustics, and Constraint least
squares problems; See  \cite{ahuja,femqep,gnad,sima}, 
and \cite{huit}. We suggest the readers refer \cite{tiss} for other applications. 
Two approaches are well-known to solve quadratic eigenvalue problems and quadratic system of equations. One approach finds an appropriate linearization that results in linear eigenvalue problems or a linear system of equations. Another approach  projects larger sparse quadratic problem onto a lower dimensional subspace, and subsequently produce a small, dense QEP or a system of equations. 
The first approach has a drawback that it increases the condition number due to linear problems of double the size; see, e.g., \cite{hwang}. The popular methods such as Residual inverse iteration, Second-Order Arnoldi (SOAR), and Two-level orthogonal Arnoldi (TOAR) methods follow the second  approach; 
See \cite{soar,neum,lock}, and \cite{stabtoar}.

For the given quadratic problem, The SOAR method constructs an orthonormal basis of a second-order Krylov subspace using a recurrence relation an analog to that in the Arnoldi method. It also generates a non-orthonormal basis of a Krylov subspace associated with the corresponding linear problem. To do this, SOAR requires a solution of a triangular linear system, ill-conditioned, in general. Though this causes numerical instability, the SOAR method found applications in Quadratic eigenvalue problems, Structural acoustics analysis, and Model order reduction of second-order
dynamical systems, etc. (see \cite{yang}, \cite{puri}, and \cite{baisu} for further information).

\cite{zhu} and \cite{su} proposed the \emph{Two-Level orthogonal Arnoldi(TOAR) method} to overcome the instability problem in SOAR. As the name suggests, the Gram-Schmidt orthonormalization procedure involved in the two levels of TOAR; in the first and the second levels to construct orthonormal bases for the second-order Krylov subspace and the associated linear Krylov subspace, respectively. 
\cite{stabtoar} proved under some mild assumptions that TOAR with partial reorthogonalization is backward stable to compute an orthonormal basis of associated linear Krylov subspace. However, 
similar stability analysis for the second-order Krylov subspace with respect to the coefficient matrices of a quadratic problem left open; see the Concluding Remarks in \cite{stabtoar}.
%
%

In this paper, we are proposing the Improved TOAR(I-TOAR) method. The I-TOAR method improves TOAR in constructing an orthonormal basis of a second-order Krylov subspace. The proposed improvement is necessary in the TOAR method to solve the said open problem. Using I-TOAR, this paper does the stability analysis for the second-order Krylov subspace with respect to the coefficient matrices of a quadratic problem.



This paper is organized as follows. In Section-2 we briefly discuss the SOAR and TOAR methods and establish relations between the matrix  $Q_k$ and submatrices of $U_k$ those generated by the TOAR method.
Then, Section-3 presents theoretical results those motivated to
improve TOAR.  Section-4 proposes the I-TOAR(Improved
TOAR) algorithm and discusses its implementation details. 
Section-5 does rigorous backward error analysis of I-TOAR in terms of coefficient matrices in quadratic problems for
computing an orthonormal basis of a second-order Krylov
subspace. Section-6 compares the results of numerical experiments
with an application of the TOAR and I-TOAR methods in the Model Order
Reduction of second-order dynamical systems. Section-7 concludes the
paper.

\section{SOAR and TOAR methods}
Let $A$ and $B$ be matrices of order $n,$ and $r_{-1},$ $r_{0}$
be vectors of length $n.$ Then, a sequence of vectors $r_{1},
r_{2}, r_{3},\cdots, $  satisfying the following recurrence relation,
\begin{equation}\label{eq2.1}\relax
r_j = Ar_{j-1}+Br_{j-2}~~\mbox{for}~ j \geq 1
\end{equation}
is called a \textit{second-order Krylov sequence}. The
subspace
\begin{equation}\label{eq2.2}\relax
{G}_k(A,B;r_{-1},r_0) \equiv \mbox{span}\{r_{-1},r_0,r_1,\cdots,
r_{k-1}\}
\end{equation}
is called a \textit{kth second-order Krylov subspace}. A second-order Krylov subspace can be embedded in the linear Krylov subspace $K_k(L,v_0),$ for
\begin{equation}\label{eq2.3}\relax
L=\begin{bmatrix}A~&B\\I~& 0 \end{bmatrix} ~~\mbox{and}~~
v_0=\begin{pmatrix} r_0\\r_{-1} \end{pmatrix},
\end{equation}
where $I$ is an Identity matrix of order $n,$ for details refer to  \cite{stabtoar}.

Let a vector $q_1$ be a linear combination of $r_{-1},$  $r_0,$ and $\|q_1\|=1.$ If a vector $q_{k+1}$ at $k^{th}$ iteration of SOAR is non-zero then it is
orthogonal to the 
set of unit vectors
generated in previous iterations. Further, $\|q_{k+1}\|=1.$ The
non-zero column vectors of 
$Q_{k+1}\equiv[q_1~q_2~\cdots~q_k~q_{k+1}]$ 
form an orthonormal basis for the second-order Krylov subspace
$G_k(A,B;r_{-1},r_0).$
%
In SOAR,    
$P_k:=[p_1,p_2,\cdots p_k]$ is the matrix 
satisfying the following relations:
$$AQ_k+BP_k=Q_kT_k+t_{k+1,k}q_{k+1}e_k^\ast,$$
and
$$Q_k= P_kT_k+t_{k+1,k}p_{k+1}e_k^\ast,$$
where $T_k$ is an upper Hessenberg matrix of
order $k.$  
In compact form these relations can be written as
follows:
$$\begin{bmatrix}A~&B\\I~& 0 \end{bmatrix}\begin{bmatrix}Q_k\\P_k \end{bmatrix}= \begin{bmatrix}Q_k\\P_k \end{bmatrix}H_k+t_{k+1,k} \begin{pmatrix}q_{k+1}\\p_{k+1}\end{pmatrix}e_k^\ast.$$
Observe that the above equation is similar to the Arnoldi decomposition for
the matrix $L$ and the initial vector $v_0.$ However, column vectors
of a matrix $\begin{bmatrix}Q_k\\P_k
\end{bmatrix}$ are non-orthonormal even though they form a basis for the linear Krylov subspace $K_k(L,v_0).$  
The SOAR method requires the solution of ill-conditioned triangular linear system in order to avoid explicit computation of $P_k.$ 
Consequently, this makes the SOAR procedure numerically instable. To circumvent the instability an alternative method proposed in \cite{stabtoar}, the Two-level orthogonalization Arnoldi(TOAR) method. 

The TOAR method  starts with
randomly chosen non-zero initial vector $v_0:=\begin{pmatrix}
r_0\\r_{-1}\end{pmatrix}.$ Then, it finds a rank revealing QR
decomposition of the matrix $[r_{-1}~~r_0]$:
$$[r_{-1}~~r_0]= Q_1 X,$$
where $Q_1$ and $X$ are matrices  of order  $n \times \eta_1$ and
$\eta_1 \times 2,$ respectively. If the vectors $r_{-1}$ and $r_0$ are
linearly independent then $\eta_1=2,$ otherwise $\eta_1=1.$ Following the MATLAB notation define $$U_{1,1}:=X(:,2)/\|v_0\|~\mbox{and}~U_{1,2}:= X(:,1)/\|v_0\|.$$ 
In TOAR the vector $\begin{pmatrix}Q_1 U_{1,1}\\Q_1
U_{1,2} \end{pmatrix}$  forms an orthonormal basis for  $K_1(L,v_0).$ Henceforth, TOAR recursively computes the matrices $Q_k,$ $U_{k,1},$ and $U_{k,2}$ using the relations in the following lemma; the Lemma-3.1 in \cite{stabtoar}.

\ni\begin{lem}\label{lem1}\relax 
Let column vectors of $V_j \equiv \begin{bmatrix}Q_j U_{j,1}\\Q_j
U_{j,2} \end{bmatrix}$ 
form an orthonormal 
basis for  
$K_j(L,v_0),$ for $j=k,k+1.$ Assume that the matrices $V_k$ and $V_{k+1}$ are governed by the following
Arnoldi decomposition of order $k:$
\begin{equation}\label{eq2.4}\relax
LV_k=V_{k+1}\underline{H_{k+1}},
\end{equation}
where $V_{k+1}$ is a matrix consisting $V_k$ in its first $k$ columns, and
$\underline{H_{k+1}}$ is an upper Hessenberg matrix of order $(k+1)
\times k.$  
Then, 
$$\mbox{span}\{Q_{k+1}\}=\mbox{span}\{Q_{k},r\},$$
where $r= AQ_{k}U_{k,1}(:,k)+BQ_{k}U_{k,2}(:,k).$ Furthermore,
\\ (a)
if $r \in \mbox{span}\{Q_{k}\},$ then there exist vectors $x_k$ and
$y_k$ such that
\begin{equation}\label{eq5}\relax
Q_{k+1}=Q_{k},~~~U_{k+1,1}=[U_{k,1}~ x_k],~~~and~~U_{k+1,2}=[U_{k,2}~y_k];
\end{equation}
 (b) otherwise, there exist vectors $x_k,$ $y_k,$ and a scalar $\beta_k \neq 0$ such that
\begin{equation}\label{eq6}\relax
Q_{k+1}=[Q_{k}~~q_{k+1}],~~~U_{k+1,1}=
\begin{bmatrix}U_{k,1}~&x_k\\0~&\beta_k \end{bmatrix}, ~~~and~~~U_{k+1,2}=
\begin{bmatrix}U_{k,2}~&y_k\\0~&0 \end{bmatrix}.
\end{equation}
\end{lem}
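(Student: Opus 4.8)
The plan is to read off the last column of the Arnoldi decomposition \eqref{eq2.4} and to exploit the $2\times 1$ block partition of $L$ and of $V_k,V_{k+1}$. Since $V_k(:,k)=\bigl(Q_kU_{k,1}(:,k);\ Q_kU_{k,2}(:,k)\bigr)$, the block form of $L$ gives
\[
LV_k(:,k)=\begin{pmatrix}AQ_kU_{k,1}(:,k)+BQ_kU_{k,2}(:,k)\\ Q_kU_{k,1}(:,k)\end{pmatrix}=\begin{pmatrix}r\\ Q_kU_{k,1}(:,k)\end{pmatrix},
\]
while, because $\underline{H_{k+1}}$ is $(k+1)\times k$ upper Hessenberg and $V_{k+1}$ agrees with $V_k$ in its first $k$ columns, $LV_k(:,k)=\sum_{i=1}^{k}h_{ik}V_k(:,i)+h_{k+1,k}V_{k+1}(:,k+1)$. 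Writing $w_1:=Q_{k+1}U_{k+1,1}(:,k+1)$, $w_2:=Q_{k+1}U_{k+1,2}(:,k+1)$ for the two $n$-blocks of the new Arnoldi vector and $\bm h:=(h_{1k},\dots,h_{kk})^{\ast}$, and equating the top and bottom $n$-blocks on the two sides, I obtain
\[
r=Q_k\bigl(U_{k,1}\bm h\bigr)+h_{k+1,k}\,w_1,\qquad Q_kU_{k,1}(:,k)=Q_k\bigl(U_{k,2}\bm h\bigr)+h_{k+1,k}\,w_2 .
\]

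Next I would record two consequences. First, $h_{k+1,k}\neq 0$: were it zero, $LV_k(:,k)$ would lie in $\mbox{span}\{V_k\}$, and since for $i<k$ the $i$th column of $\underline{H_{k+1}}$ is supported in its first $k$ rows the same holds for $LV_k(:,i)$, so $L$ would map $K_k(L,v_0)=\mbox{span}\{V_k\}$ into itself, forcing $K_{k+1}(L,v_0)=K_k(L,v_0)$ and contradicting that $V_{k+1}$ has $k+1$ orthonormal columns. Second, from the two identities, $w_2=h_{k+1,k}^{-1}Q_k\bigl(U_{k,1}(:,k)-U_{k,2}\bm h\bigr)\in\mbox{span}\{Q_k\}$, whereas applying $I-\Pi_k$ (the orthogonal projector onto $\mbox{span}\{Q_k\}$) to the first identity gives $(I-\Pi_k)w_1=h_{k+1,k}^{-1}(I-\Pi_k)r$; in particular $w_1\in\mbox{span}\{Q_k\}$ if and only if $r\in\mbox{span}\{Q_k\}$.

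I would then split on whether $r\in\mbox{span}\{Q_k\}$. If $r\in\mbox{span}\{Q_k\}$, both $w_1$ and $w_2$, and also the two $n$-blocks of each $V_{k+1}(:,i)=V_k(:,i)$ for $i\le k$, lie in $\mbox{span}\{Q_k\}$; hence one may take $Q_{k+1}=Q_k$ and write $V_{k+1}=\bigl(Q_kU_{k+1,1};\ Q_kU_{k+1,2}\bigr)$, and matching the first $k$ columns of $V_{k+1}$ with those of $V_k$ together with the linear independence of the columns of $Q_k$ forces $U_{k+1,1}=[\,U_{k,1}\ x_k\,]$, $U_{k+1,2}=[\,U_{k,2}\ y_k\,]$, i.e.\ \eqref{eq5}; moreover $\mbox{span}\{Q_{k+1}\}=\mbox{span}\{Q_k\}=\mbox{span}\{Q_k,r\}$. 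If instead $r\notin\mbox{span}\{Q_k\}$, set $q_{k+1}:=(I-\Pi_k)r/\|(I-\Pi_k)r\|$ and $Q_{k+1}:=[\,Q_k\ q_{k+1}\,]$, which has orthonormal columns and spans $\mbox{span}\{Q_k,r\}$; by the above every $n$-block of every column of $V_{k+1}$ lies in $\mbox{span}\{Q_{k+1}\}$, so $V_{k+1}=\bigl(Q_{k+1}U_{k+1,1};\ Q_{k+1}U_{k+1,2}\bigr)$, and expressing the columns in the basis $Q_{k+1}$ shows that the columns $V_k(:,i)$, $i\le k$, have a zero in their last coordinate, that $w_2\in\mbox{span}\{Q_k\}$ produces the zero in the last row of $U_{k+1,2}(:,k+1)$, and that the component of $w_1$ along $q_{k+1}$ produces the entry $\beta_k:=h_{k+1,k}^{-1}\|(I-\Pi_k)r\|\neq 0$ in the last row of $U_{k+1,1}(:,k+1)$; this is exactly \eqref{eq6}.

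The one genuinely non-routine point is the no-breakdown step $h_{k+1,k}\neq0$, which licences every division by $h_{k+1,k}$ and the splitting of the new column into its part in $\mbox{span}\{Q_k\}$ and its part along $q_{k+1}$; the rest is bookkeeping with the block partition, tracking which entries of $U_{k+1,1},U_{k+1,2}$ are pinned down by the two identities and which (namely $x_k,y_k$) remain free.
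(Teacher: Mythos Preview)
Your proof is correct. The paper does not actually supply its own proof of this lemma: it is quoted verbatim as Lemma~3.1 of \cite{stabtoar} and simply used as background for the TOAR recursion. Your argument---reading the last column of \eqref{eq2.4} blockwise, using the hypothesis $\dim K_{k+1}(L,v_0)=k+1$ to rule out $h_{k+1,k}=0$, observing that the bottom block $w_2$ automatically lies in $\mathrm{span}\{Q_k\}$ while the top block $w_1$ escapes $\mathrm{span}\{Q_k\}$ exactly when $r$ does, and then assembling $Q_{k+1},U_{k+1,1},U_{k+1,2}$ in the two cases---is precisely the kind of direct computation one would expect and matches how the result is derived in the original TOAR paper. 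There is nothing to correct.
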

Observe from the Lemma-\ref{lem1} that the column vectors of $\begin{bmatrix}
U_{k,1}\\ U_{k,2} \end{bmatrix}$ are orthonormal as $Q_k$ have orthonormal columns, and 
$$\begin{bmatrix}Q_k
U_{k,1}\\Q_k U_{k,2} \end{bmatrix}= \begin{bmatrix}Q_k &~~\\~~&Q_k
\end{bmatrix}\begin{bmatrix}
U_{k,1}\\U_{k,2} \end{bmatrix}.$$
Also observe
from the Lemma-\ref{lem1} that $U_{k,1}$ is an upper Hessenberg matrix if the first column of $U_{k,1}$ has non-zero elements in the rows $1$ and $2.$ 
But, it need not be an
unreduced matrix. On the other hand, $U_{k,1}$ is an upper triangular matrix when the order of a matrix $U_{1,1}$ is $1.$ Similarly, it is easy to see that $U_{k,2}$ is an upper triangular
matrix as $U_{1,2}$ is either a matrix of order $1$ or a matrix of order $2 \times 1$ with a zero element at the bottom. The following lines state this discussion in the form of a lemma.
\begin{lem}\label{lem2}\relax
If the vectors $r_{-1}$ and $r_0$ are linearly independent, then
$U_{k,1}$ is an upper Hessenberg matrix. Otherwise, it is an upper
triangular matrix. $U_{k,2}$ is always an upper triangular matrix.
\end{lem}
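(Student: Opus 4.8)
The plan is to read off everything from the recursive structure in Lemma~\ref{lem1} and push it through an induction on $k$. The base case is immediate from the definitions of $U_{1,1}$ and $U_{1,2}$: the matrix $X$ coming from the rank-revealing QR factorisation of $[r_{-1}~~r_0]$ is upper triangular of size $\eta_1\times 2$. If $r_{-1}$ and $r_0$ are linearly independent then $\eta_1=2$, so $U_{1,1}=X(:,2)/\|v_0\|$ is a full $2\times 1$ column (generically with a nonzero bottom entry), which I will regard as the degenerate $1\times$-something upper Hessenberg case, while $U_{1,2}=X(:,1)/\|v_0\|$ has its second entry equal to zero because $X$ is upper triangular, hence $U_{1,2}$ is upper triangular. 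If $r_{-1},r_0$ are dependent then $\eta_1=1$ and both $U_{1,1},U_{1,2}$ are $1\times 1$, trivially upper triangular.

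For the inductive step I would invoke Lemma~\ref{lem1} directly. In case (a) we have $U_{k+1,1}=[\,U_{k,1}~~x_k\,]$ and $U_{k+1,2}=[\,U_{k,2}~~y_k\,]$ with no new row appended, so the number of rows is unchanged; appending a column on the right of an upper Hessenberg (resp.\ upper triangular) matrix keeps it upper Hessenberg (resp.\ upper triangular), since every new entry lies in the last column and the subdiagonal/diagonal structure below the old columns is untouched. In case (b) a new row is appended together with a new column: $U_{k+1,1}=\begin{bmatrix}U_{k,1}&x_k\\ 0&\beta_k\end{bmatrix}$ and $U_{k+1,2}=\begin{bmatrix}U_{k,2}&y_k\\ 0&0\end{bmatrix}$. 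The crucial observation is that the bottom-left block is exactly $0$ in both cases, so the only possibly-nonzero subdiagonal entry created in $U_{k+1,1}$ is $\beta_k$ sitting immediately below the last diagonal entry of $U_{k,1}$ — that is precisely the subdiagonal position allowed in a Hessenberg matrix — and for $U_{k+1,2}$ the new row is entirely zero, so no subdiagonal entry is created at all and upper-triangularity is preserved. This is the heart of the argument and it is essentially bookkeeping once Lemma~\ref{lem1} is in hand.

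The one genuine subtlety, and the step I expect to need the most care, is the very first column of $U_{k,1}$ in the linearly independent case: the claim that $U_{k,1}$ is \emph{Hessenberg} (and not merely ``Hessenberg from column two onward'') requires that $U_{1,1}$ really does have the shape of a $2\times 1$ block with a nonzero entry in row $2$, or more precisely that once we are past the first step the structure propagates cleanly. I would handle this by making explicit that $\eta_1=2$ forces $U_{1,1}$ to be the $2\times1$ vector $X(:,2)/\|v_0\|$, whose growth under case (b) appends $\beta_k$ in the subdiagonal slot, so at every stage $U_{k,1}$ has at most one nonzero entry below the main diagonal in each column, which is the definition of upper Hessenberg; and to note that the paragraph preceding the lemma already flags that $U_{k,1}$ ``need not be unreduced,'' so zeros on the subdiagonal are permitted and cause no trouble. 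For $U_{k,2}$ there is nothing delicate: $U_{1,2}$ is upper triangular (a single entry, or a $2\times1$ vector with zero bottom because $X$ is triangular), and case~(b) only ever appends a zero row, so induction closes immediately. Assembling the base case and the two inductive cases then yields the statement of Lemma~\ref{lem2}.
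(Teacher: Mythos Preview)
Your proposal is correct and follows essentially the same approach as the paper: the paper's ``proof'' is the paragraph immediately preceding the lemma, which simply reads off the Hessenberg/triangular structure from the recursive updates of Lemma~\ref{lem1} together with the shape of $U_{1,1}$ and $U_{1,2}$ coming from the QR step, and your write-up is a more explicit inductive version of that same observation. The paper is terser and does not spell out the case~(a)/case~(b) bookkeeping or the row-count check that makes ``appending a column preserves Hessenberg/triangular'' work, but the underlying argument is identical.
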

Next, by using the equation (\ref{eq2.4}), the following lemma proves that
$U_{k,1}^\ast Q_k^\ast
(AQ_KU_{k,1}+BQ_kU_{k,2})+U_{k,2}^\ast U_{k,1}$ is an upper
Hessenberg matrix. 
\begin{lem}\label{lem3}\relax
Let $Q_k,$ $U_{k,1},$ and $U_{k,2}$ be the same as in the
Lemma-\ref{lem1}. Then, the matrix $U_{k,1}^\ast Q_k^\ast
(AQ_KU_{k,1}+BQ_kU_{k,2})+U_{k,2}^\ast U_{k,1}$ is an upper
Hessenberg matrix.
\end{lem}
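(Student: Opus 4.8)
The plan is to recognize that the matrix in the statement is nothing but the Rayleigh--Ritz compression $V_k^\ast L V_k$ of $L$ onto the columns of $V_k$, and then to read off its Hessenberg structure directly from the Arnoldi decomposition (\ref{eq2.4}).

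First I would expand $LV_k$ using the block form (\ref{eq2.3}) of $L$ and the definition $V_k=\begin{bmatrix}Q_kU_{k,1}\\ Q_kU_{k,2}\end{bmatrix}$, which gives
$$LV_k=\begin{bmatrix}AQ_kU_{k,1}+BQ_kU_{k,2}\\ Q_kU_{k,1}\end{bmatrix}.$$
Multiplying on the left by $V_k^\ast=\begin{bmatrix}(Q_kU_{k,1})^\ast & (Q_kU_{k,2})^\ast\end{bmatrix}$ and using $Q_k^\ast Q_k=I$ (valid because $Q_k$ has orthonormal columns, as observed right after Lemma~\ref{lem1}) yields
$$V_k^\ast LV_k=U_{k,1}^\ast Q_k^\ast\bigl(AQ_kU_{k,1}+BQ_kU_{k,2}\bigr)+U_{k,2}^\ast U_{k,1},$$
which is exactly the matrix named in the statement.

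Second, I would left-multiply the decomposition (\ref{eq2.4}) by $V_k^\ast$. Since the columns of $V_{k+1}$ are orthonormal and $V_k$ is precisely the submatrix formed by its first $k$ columns, we have $V_k^\ast V_{k+1}=[\,I_k\ \ 0\,]$, and therefore $V_k^\ast LV_k=[\,I_k\ \ 0\,]\,\underline{H_{k+1}}=H_k$, the leading $k\times k$ principal submatrix of $\underline{H_{k+1}}$. Because $\underline{H_{k+1}}$ is an $(k+1)\times k$ upper Hessenberg matrix, its leading $k\times k$ block $H_k$ is upper Hessenberg; combining this with the previous computation proves the claim.

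There is essentially no hard step here: the argument is block-matrix bookkeeping together with the orthonormality relations $Q_k^\ast Q_k=I$ and $V_k^\ast V_{k+1}=[\,I_k\ \ 0\,]$. The only point that deserves a moment's care is that last identity, which relies on the hypothesis of Lemma~\ref{lem1} that both $V_k$ and $V_{k+1}$ have orthonormal columns and that $V_{k+1}$ is obtained from $V_k$ by appending one column.
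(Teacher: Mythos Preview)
Your proof is correct and follows essentially the same route as the paper: both multiply the Arnoldi relation (\ref{eq2.4}) on the left by $V_k^\ast$ and use the orthonormality of $Q_k$ and of $V_{k+1}$ to identify the matrix in question with $H_k$. Your write-up is a bit more explicit about the identities $V_k^\ast LV_k$ and $V_k^\ast V_{k+1}=[I_k\ 0]$, but the argument is the same.
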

\begin{proof}
By considering the equation~(\ref{eq2.4}), we have
\begin{equation}\label{eq7}\relax
\begin{bmatrix}A~&B\\I~&0 \end{bmatrix} \begin{bmatrix}Q_k
U_{k,1}\\Q_k U_{k,2} \end{bmatrix} = \begin{bmatrix}Q_{k+1}
U_{k+1,1}\\Q_{k+1} U_{k+1,2} \end{bmatrix} \underline{H_{k+1}}.
\end{equation}
Now, apply an inner product on both the sides with the matrix
$\begin{bmatrix}Q_k U_{k,1}\\Q_k U_{k,2}
\end{bmatrix}.$ Furthermore, use the fact from the Lemma-
\ref{lem1} that column vectors of $V_j \equiv \begin{bmatrix}Q_j U_{j,1}\\Q_j
U_{j,2} \end{bmatrix}$ are orthonormal for $j=k,k+1$ to get the following:
\begin{equation}\label{2.8}\relax
U_{k,1}^\ast Q_k^\ast (AQ_kU_{k,1}+BQ_kU_{k,2})+U_{k,2}^\ast (Q_k^\ast Q_k)
U_{k,1} =H_k,
\end{equation}
where $H_k$ is a principal submatrix of order $k$ from the top left corner of $\underline{H_{k+1}},$ and is also an upper Hessenberg matrix. Since coulmn vectors of $Q_k$ are orthonormal, the above equation proves the lemma.
Hence the proof is over.
\end{proof}

The following lemma will be helpful later. It derives a relation between the
matrices $U_{k,1},$ $U_{k,2},$ and the vectors $x_k,$ $y_k.$

\begin{lem}\label{lem4}\relax
Let $U_{i,1}$ and $U_{i,2}$ for $i=k,~k+1$ be the same as
in the Lemma-1. Then, $U_{k,1}^\ast x_k= -U_{k,2}^\ast y_k.$
\end{lem}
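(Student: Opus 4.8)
The plan is to extract the relation $U_{k,1}^\ast x_k = -U_{k,2}^\ast y_k$ from the orthonormality of the columns of $V_{k+1}$, using the block structure of $U_{k+1,1}$ and $U_{k+1,2}$ supplied by Lemma~\ref{lem1}. The key observation is that orthonormality of the columns of $V_{k+1}$ is equivalent (via the displayed identity $V_k = \bigl[\begin{smallmatrix}Q_k & \\ & Q_k\end{smallmatrix}\bigr]\bigl[\begin{smallmatrix}U_{k,1}\\ U_{k,2}\end{smallmatrix}\bigr]$ and the orthonormality of $Q_k$'s columns) to the statement that $\bigl[\begin{smallmatrix}U_{k+1,1}\\ U_{k+1,2}\end{smallmatrix}\bigr]$ has orthonormal columns, i.e. $U_{k+1,1}^\ast U_{k+1,1} + U_{k+1,2}^\ast U_{k+1,2} = I$.

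First I would treat case (b) of Lemma~\ref{lem1}, where
$$U_{k+1,1} = \begin{bmatrix} U_{k,1} & x_k \\ 0 & \beta_k \end{bmatrix}, \qquad U_{k+1,2} = \begin{bmatrix} U_{k,2} & y_k \\ 0 & 0 \end{bmatrix}.$$
Forming $U_{k+1,1}^\ast U_{k+1,1} + U_{k+1,2}^\ast U_{k+1,2}$ and reading off the off-diagonal block (the first $k$ rows, last column) gives
$$U_{k,1}^\ast x_k + U_{k,2}^\ast y_k = 0,$$
since the $(2,1)$ blocks of $U_{k+1,1}$ and $U_{k+1,2}$ are zero; this is exactly the claim. (As a consistency check, the $(k+1,k+1)$ entry gives $\|x_k\|^2 + \beta_k^2 + \|y_k\|^2 = 1$, and the leading $k\times k$ block reproduces the orthonormality of the columns of $\bigl[\begin{smallmatrix}U_{k,1}\\ U_{k,2}\end{smallmatrix}\bigr]$, so nothing is lost.) For case (a), where $Q_{k+1} = Q_k$, $U_{k+1,1} = [\,U_{k,1}~~x_k\,]$ and $U_{k+1,2} = [\,U_{k,2}~~y_k\,]$ with no extra row, the same block computation of $U_{k+1,1}^\ast U_{k+1,1} + U_{k+1,2}^\ast U_{k+1,2} = I$ yields the off-diagonal block relation $U_{k,1}^\ast x_k + U_{k,2}^\ast y_k = 0$ directly.

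There is no serious obstacle here; the only thing to be careful about is the reduction from ``columns of $V_{k+1}$ orthonormal'' to ``columns of $\bigl[\begin{smallmatrix}U_{k+1,1}\\ U_{k+1,2}\end{smallmatrix}\bigr]$ orthonormal,'' which requires that $Q_{k+1}$ has full column rank with orthonormal columns — this is guaranteed by the construction in Lemma~\ref{lem1} and was already noted in the remark following that lemma. Once that is in hand, the result is a one-line consequence of comparing the appropriate off-diagonal block of the Gram matrix with the corresponding block of the identity.
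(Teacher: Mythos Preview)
Your proof is correct and considerably more direct than the paper's. You observe that the orthonormality of the columns of $V_{k+1}$, together with $Q_{k+1}^\ast Q_{k+1}=I$, forces $U_{k+1,1}^\ast U_{k+1,1}+U_{k+1,2}^\ast U_{k+1,2}=I$; reading off the $(1{:}k,\,k{+}1)$ block of this identity immediately gives $U_{k,1}^\ast x_k+U_{k,2}^\ast y_k=0$ in both cases (a) and (b). This uses nothing beyond the remark the paper itself makes just after Lemma~\ref{lem1}.

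The paper instead argues through the Arnoldi relation~(\ref{eq2.4}). It compares the bottom $n$ rows to obtain $U_{k,1}=U_{k,2}H_k+h_{k+1,k}y_ke_k^\ast$, substitutes this into the identity of Lemma~\ref{lem3}, then compares the top $n$ rows of~(\ref{eq2.4}) and unwinds using the block structure of $U_{k+1,1}$ and $\underline{H_{k+1}}$, finally cancelling the nonzero factor $h_{k+1,k}$. The payoff of the paper's longer route is that it establishes along the way the relation $U_{k,2}^\ast U_{k,1}=U_{k,2}^\ast U_{k,2}H_k+h_{k+1,k}U_{k,2}^\ast y_ke_k^\ast$ (equation~(\ref{new1})), which is reused in the proof of Lemma~\ref{lem7}. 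Your argument is more elementary, does not require the Arnoldi decomposition or the hypothesis $h_{k+1,k}\neq 0$, and handles cases (a) and (b) uniformly; if one also needs~(\ref{new1}) later, it can be derived separately in two lines from the bottom block of~(\ref{eq7}).
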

\begin{proof}
Comparing the last $n$ rows of both the sides of the
equation~(\ref{eq7}) gives
\begin{equation}\label{vig1}\relax
Q_kU_{k,1}= Q_{k+1}U_{k+1,2}\underline{H_{k+1}}.
\end{equation}
Multiply both the sides of the above equation with $Q_k^\ast.$
Since the columns of $Q_k$ are orthonormal, this gives
$$U_{k,1} = [I~~0]U_{k+1,2}\underline{H_{k+1}}.$$
Moreover, by using the structure of $U_{k+1,2}$ from the
equation (\ref{eq6}),  
this implies
$$U_{k,1}= U_{k,2}H_k+h_{k+1,k}y_ke_k^\ast$$
Thus,
\begin{equation}\label{new1}\relax
U_{k,2}^\ast U_{k,1} = U_{k,2}^\ast U_{k,2}H_k+h_{k+1,k}U_{k,2}^\ast
y_ke_k^\ast.
\end{equation}
Consequently, substituting this relation in the equation (\ref{2.8}) gives the following:
$$U_{k,1}^\ast Q_k^\ast (AQ_kU_{k,1}+BQ_kU_{k,2})+ U_{k,2}^\ast U_{k,2}H_k+h_{k+1,k}U_{k,2}^\ast
y_ke_k^\ast = H_k.$$ Further, by using $U_{k,2}^\ast U_{k,2}=
I-U_{k,1}^\ast U_{k,1},$ the above equation becomes as follows:
$$U_{k,1}^\ast Q_k^\ast (AQ_kU_{k,1}+BQ_kU_{k,2})-U_{k,1}^\ast U_{k,1}H_k =
-h_{k+1,k}U_{k,2}^\ast y_k e_k^\ast.$$ Now, compare the first $n$
rows of both sides of the equation~(\ref{eq2.4}) to observe 
$(AQ_kU_{k,1}+BQ_kU_{k,2})= Q_{k+1}U_{k+1,1}\underline{H_{k+1}}.$ By using this, the above equation gives
$$U_{k,1}^\ast Q_k^\ast Q_{k+1}U_{k+1,1}\underline{H_{k+1}}-U_{k,1}^\ast U_{k,1}H_k =
-h_{k+1,k}U_{k,2}^\ast y_k e_k^\ast.$$ As the column vectors of $Q_k$ are
orthonormal, this implies
$$U_{k,1}^\ast [I~~0]U_{k+1,1}\underline{H_{k+1}}-U_{k,1}^\ast U_{k,1}H_k =
-h_{k+1,k}U_{k,2}^\ast y_k e_k^\ast.$$ Because of the structure of
$U_{k+1,1}$ in Lemma-1, the above equation gives
$$U_{k,1}^\ast [U_{k,1}~~x_k]\underline{H_{k+1}}-U_{k,1}^\ast U_{k,1}H_k =
-h_{k+1,k}U_{k,2}^\ast y_k e_k^\ast.$$ Finally, use
$\underline{H_{k+1}}
=\begin{bmatrix}H_k\\h_{k+1,k}e_k^\ast\end{bmatrix} $ to get
$$U_{k,1}^\ast U_{k,1}H_k+h_{k+1,k}U_{k,1}^\ast x_ke_k^\ast -U_{k,1}^\ast U_{k,1}H_k =
-h_{k+1,k}U_{k,2}^\ast y_k e_k^\ast.$$ Therefore,we have proved the
lemma, since $h_{k+1,k} \neq 0.$
\end{proof}
\section{Improved TOAR method}
In this section, we present a few results which are the ground for
proposing the new algorithm, I-TOAR. In I-ToAR, the computation of an
orthonormal basis of a second-order Krylov subspace remains same as
in TOAR, except I-TOAR imposes an additional condition of
orthogonality on the matrix $U_{k,1}.$ In the following
Lemmas, we will prove that imposing such a condition on $U_{k,1}$ prompts 
$U_{k,2}$ to be a diagonal matrix.

\ni\begin{lem}\label{lem6}\relax Let the matrix $Q_kU_{k,1}$ have
orthogonal columns, and the columns of $Q_k U_{k,2}$ are $B-$
orthogonal. If $A$ is symmetric, and $U_{k,1}^\ast U_{k,1}+U_{k,2}^\ast Q_k^\ast BQ_k U_{k,2}$ is an identity matrix, then a
matrix $H_k$ also symmetric.
\end{lem}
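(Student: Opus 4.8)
The plan is to start from the identity established in Lemma~\ref{lem3}, namely equation~(\ref{2.8}),
$$U_{k,1}^\ast Q_k^\ast (AQ_kU_{k,1}+BQ_kU_{k,2})+U_{k,2}^\ast (Q_k^\ast Q_k) U_{k,1} =H_k,$$
and to exploit the three hypotheses: $A$ symmetric, $Q_kU_{k,1}$ has orthogonal columns (so $U_{k,1}^\ast Q_k^\ast Q_k U_{k,1}$ is diagonal), the columns of $Q_k U_{k,2}$ are $B$-orthogonal (so $U_{k,2}^\ast Q_k^\ast B Q_k U_{k,2}$ is diagonal, hence symmetric), and $U_{k,1}^\ast U_{k,1}+U_{k,2}^\ast Q_k^\ast BQ_k U_{k,2}=I$. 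First I would split $H_k$ into the sum of the two terms $M:=U_{k,1}^\ast Q_k^\ast A Q_k U_{k,1}$, $N:=U_{k,1}^\ast Q_k^\ast B Q_k U_{k,2}+U_{k,2}^\ast Q_k^\ast Q_k U_{k,1}$, and handle each separately.

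For $M$, symmetry is immediate: $M^\ast = U_{k,1}^\ast Q_k^\ast A^\ast Q_k U_{k,1}=U_{k,1}^\ast Q_k^\ast A Q_k U_{k,1}=M$ since $A^\ast=A$. For $N$, the key is to rewrite the cross term $U_{k,1}^\ast Q_k^\ast B Q_k U_{k,2}$ in a manifestly symmetric-friendly form. The natural move is to use the compact Arnoldi identity~(\ref{eq7}): comparing the last $n$ rows gave (as in the proof of Lemma~\ref{lem4}) $Q_k U_{k,1}=Q_{k+1}U_{k+1,2}\,\underline{H_{k+1}}$, and comparing the first $n$ rows gave $AQ_kU_{k,1}+BQ_kU_{k,2}=Q_{k+1}U_{k+1,1}\,\underline{H_{k+1}}$. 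Substituting these into~(\ref{2.8}) and using orthonormality of $Q_k$'s columns together with the block structure of $U_{k+1,1},U_{k+1,2}$ from Lemma~\ref{lem1}, one obtains $H_k$ expressed through $U_{k,1}^\ast U_{k,1}H_k$, $U_{k,2}^\ast U_{k,2}H_k$ and the rank-one corrections involving $x_k,y_k$; then invoke the normalization hypothesis in the form $U_{k,1}^\ast U_{k,1}+U_{k,2}^\ast Q_k^\ast BQ_kU_{k,2}=I$ to collapse the $H_k$-terms. Alternatively, and more cleanly, I would argue directly: from~(\ref{2.8}), $H_k - H_k^\ast = N - N^\ast$ (the $M$ and the diagonal pieces cancel), so it suffices to show $N$ is symmetric; but $N = U_{k,1}^\ast Q_k^\ast B Q_k U_{k,2}+(U_{k,1}^\ast Q_k^\ast Q_k U_{k,2})^\ast$ is visibly of the form $P + R^\ast$ where $P=U_{k,1}^\ast Q_k^\ast B Q_k U_{k,2}$ and $R=U_{k,2}^\ast Q_k^\ast Q_k U_{k,1}$, and symmetry of $N$ amounts to $P - R = (P-R)^\ast$, i.e. $P-R$ Hermitian — which should follow from the Arnoldi relations plus the fact that $H_k$ is real upper Hessenberg and the normalization identity forces the strictly-lower part to vanish.

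I expect the main obstacle to be the middle term $N$: showing that the combination $U_{k,1}^\ast Q_k^\ast B Q_k U_{k,2}+U_{k,2}^\ast Q_k^\ast Q_k U_{k,1}$ is symmetric is not a formal transpose computation (the two summands are transposes of each other only if $B=I$), so one genuinely has to feed in the Arnoldi structure~(\ref{eq7}) and the $B$-orthogonality of the columns of $Q_kU_{k,2}$. Concretely, I would use $Q_kU_{k,1}=Q_{k+1}U_{k+1,2}\underline{H_{k+1}}$ to replace one copy of $U_{k,1}$ and rewrite $N$ in terms of $U_{k,2}$, $H_k$, $y_k$, $h_{k+1,k}$, reduce to showing a residual matrix is symmetric, and finish by combining the relation $U_{k,1}^\ast x_k=-U_{k,2}^\ast y_k$ from Lemma~\ref{lem4} with the normalization hypothesis. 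Once $N=N^\ast$ is in hand, $H_k^\ast = M^\ast + (\text{diag})^\ast + N^\ast = M + \text{diag} + N = H_k$, completing the proof.
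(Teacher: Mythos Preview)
Your decomposition $H_k = M + N$ from (\ref{2.8}) with $M = U_{k,1}^\ast Q_k^\ast A Q_k U_{k,1}$ symmetric is fine, but the treatment of $N$ is where the proposal loses traction, and the paper's argument is organized quite differently. The paper does not attack $N$; it isolates $M$. From the first block of (\ref{eq7}) one has $Q_k^\ast(AQ_kU_{k,1}+BQ_kU_{k,2}) = U_{k,1}H_k + h_{k+1,k}x_k e_k^\ast$; left-multiplying by $U_{k,1}^\ast$ and using the column-orthogonality of $U_{k,1}$ (so $U_{k,1}^\ast x_k = 0$) gives
\[
M \;+\; U_{k,1}^\ast Q_k^\ast B Q_k U_{k,2} \;=\; D\,H_k, \qquad D:=U_{k,1}^\ast U_{k,1}.
\]
Now the substitution $Q_kU_{k,1}=Q_{k+1}U_{k+1,2}\underline{H_{k+1}}$ is applied to the \emph{left} factor $(Q_kU_{k,1})^\ast$ inside the $B$-term, and $B$-orthogonality of the columns of $Q_kU_{k,2}$ kills the $y_k$ contribution, producing $U_{k,1}^\ast Q_k^\ast B Q_k U_{k,2} = H_k^\ast E$ with $E:=U_{k,2}^\ast Q_k^\ast B Q_k U_{k,2}$. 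Hence $M = D H_k - H_k^\ast E$; since $M=M^\ast$ and $D,E$ are diagonal, subtracting the transposed identity yields $(D+E)H_k = H_k^\ast(D+E)$, and the hypothesis $D+E=I$ finishes the proof in one line.

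The gap in your sketch is that neither of your branches manufactures an $H_k^\ast$ sitting to the left of a diagonal matrix, which is exactly what drives the symmetry conclusion. Your first branch (``collapse the $H_k$-terms'') substitutes into (\ref{2.8}) and generates terms of the form $U_{k,2}^\ast U_{k,2} H_k$, not $H_k^\ast E$; collapsing those via the normalization then says nothing about symmetry. Your second branch reduces to ``$P-R$ Hermitian'' (with a notational slip: you have $N = P + R$, not $P + R^\ast$), but the justification offered---that $H_k$ being upper Hessenberg together with the normalization ``forces the strictly-lower part to vanish''---is not an argument and verges on assuming what is to be proved. The missing maneuver is the transposed substitution $(Q_kU_{k,1})^\ast = \underline{H_{k+1}}^\ast U_{k+1,2}^\ast Q_{k+1}^\ast$ applied inside the $B$-term; that is what introduces $H_k^\ast$ and makes the normalization hypothesis bite.
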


\begin{proof}
From the equation (\ref{eq7}) 
we have
$$Q_k^\ast (AQ_kU_{k,1}+BQ_kU_{k,2}) =
U_{k,1}H_k+h_{k+1,k}x_ke_k^\ast.$$ Since the columns of $Q_k$ are
orthonormal, the orthogonality of columns of $Q_kU_{k,1}$ force
the columns of $U_{k,1}$ to be orthogonal. Now, multiply both the
sides of the previous equation from the right with $U_{k,1}^\ast$ to
get
$$U_{k,1}^\ast Q_k^\ast (AQ_kU_{k,1}+BQ_kU_{k,2}) =
U_{k,1}^\ast U_{k,1}H_k.$$ From the equation (\ref{vig1}) note that  $Q_kU_{k,1}= Q_{k+1}U_{k+1,2}\underline{H_{k+1}}.$  On substituting this the above equation gives:
$$U_{k,1}^\ast Q_k^\ast AQ_kU_{k,1}+\underline{H_{k+1}}^\ast U_{k+1,2}^\ast Q_{k+1}^\ast BQ_kU_{k,2} =
U_{k,1}^\ast U_{k,1}H_k.$$ Since the columns of $Q_kU_{k,2}$ are
$B-$ orthogonal, It satisfies the relation:
$$\underline{H_{k+1}}^\ast U_{k+1,2}^\ast Q_{k+1}^\ast BQ_kU_{k,2}
=H_k^\ast U_{k,2}^\ast Q_{k}^\ast BQ_kU_{k,2}.$$ Thus, the two
previous equations together gives the following:
$$U_{k,1}^\ast Q_k^\ast AQ_kU_{k,1}
=U_{k,1}^\ast U_{k,1}H_k-H_k^\ast U_{k,2}^\ast Q_{k}^\ast
BQ_kU_{k,2}.$$ Since $A$ is symmetric, $U_{k,1}^\ast Q_k^\ast
AQ_kU_{k,1}$ also symmetric.
This implies
$$(U_{k,1}^\ast U_{k,1}+U_{k,2}^\ast Q_{k}^\ast
BQ_kU_{k,2})H_k-H_K^\ast (U_{k,1}^\ast U_{k,1}+U_{k,2}^\ast
Q_{k}^\ast BQ_kU_{k,2}) =0.$$ Now, use the fact that $U_{k,1}^\ast
U_{k,1}+U_{k,2}^\ast Q_k^\ast BQ_k U_{k,2}$ is an Identity matrix to
conclude $H_k$ is symmetric. Hence, the proof over.
\end{proof}

The Lemma-\ref{lem6} envisages parallelizing the Symmetric TOAR (STOAR) procedure in \cite{camp}, by parallelly orthogonalizing columns of $U_{k,1}$ and $B-$ orthogonalizing
columns of $Q_k U_{k,2},$  
such that 
$U_{k,1}^\ast U_{k,1}+U_{k,2}^\ast Q_k^\ast BQ_k U_{k,2}$ is an Identity matrix. Though this discussion is interesting, we do not elongate it as latter distracts our attention from the paper. 
The following lemma generalizes the Lemma-\ref{lem6} for non-symmetric QEP.

\begin{lem}\label{lem7}\relax
Follow the notation of the Lemma-\ref{lem1}.
If $U_{k+1,1}$ is an orthogonal
matrix then  
$U_{k,2}$ is a diagonal
matrix.
\end{lem}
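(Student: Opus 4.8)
The plan is to combine the structural facts already established for $U_{k,1}$ and $U_{k,2}$ with the orthogonality hypothesis on $U_{k+1,1}$. First I would recall from Lemma~\ref{lem2} that $U_{k,1}$ is (at worst) an upper Hessenberg matrix and $U_{k,2}$ is always upper triangular; hence, to prove $U_{k,2}$ is diagonal, it suffices to show that every strictly-above-diagonal entry of $U_{k,2}$ vanishes. The key extra ingredient is the identity from Lemma~\ref{lem4}, namely $U_{k,1}^\ast x_k = -U_{k,2}^\ast y_k$, together with the recursive block structure in equation~(\ref{eq6}): at each step the new column $x_k$ is appended to $U_{\cdot,1}$ and $y_k$ (with a trailing zero) to $U_{\cdot,2}$.

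Next I would exploit the hypothesis that $U_{k+1,1}$ is orthogonal. Writing $U_{k+1,1}=\begin{bmatrix}U_{k,1}&x_k\\0&\beta_k\end{bmatrix}$, orthonormality of its columns forces $U_{k,1}^\ast U_{k,1}=I$ (so already $U_{k,1}$ has orthonormal columns — in fact one gets this for every $k$ by the same argument applied at earlier stages) and $U_{k,1}^\ast x_k = 0$. Feeding $U_{k,1}^\ast x_k=0$ into Lemma~\ref{lem4} yields $U_{k,2}^\ast y_k = 0$. Since $U_{k,1}$ has orthonormal columns, and the columns of $\begin{bmatrix}U_{k,1}\\U_{k,2}\end{bmatrix}$ are orthonormal (the observation following Lemma~\ref{lem1}), we get $U_{k,2}^\ast U_{k,2}=I-U_{k,1}^\ast U_{k,1}=0$, i.e.\ $U_{k,2}=0$ once $k$ is large enough that $U_{k,1}$ is square — but that is too strong, so the honest route is an induction on $k$ that tracks the last column only: assuming $U_{k,2}$ is diagonal, show the newly appended column $\begin{pmatrix}y_k\\0\end{pmatrix}$ of $U_{k+1,2}$ has all its entries above the diagonal equal to zero, using $U_{k,2}^\ast y_k=0$ and the already-diagonal (hence full-column-rank on its support) structure of $U_{k,2}$, so that $U_{k,2}^\ast y_k=0$ pins down the relevant entries of $y_k$ to be zero.

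The main obstacle I anticipate is bookkeeping the two cases of Lemma~\ref{lem1} (the deflation case (\ref{eq5}) versus the generic case (\ref{eq6})) uniformly, and handling the base case correctly: when $r_{-1},r_0$ are linearly dependent, $U_{1,1}$ and $U_{1,2}$ are $1\times 1$ and the claim is vacuous, whereas when they are independent one must check that the $2\times 1$ matrix $U_{1,2}$ — which has a zero in its bottom entry by construction — is indeed ``diagonal'' in the appropriate sense, so that the induction can start. A secondary subtlety is that $U_{k,2}$ need not have full column rank (its trailing rows can be zero), so from $U_{k,2}^\ast y_k=0$ one can only conclude that the components of $y_k$ lying in the column space of $U_{k,2}$ vanish; I would argue that this is exactly the set of components that would otherwise create off-diagonal fill, which combined with the triangularity from Lemma~\ref{lem2} gives diagonality. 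Once these cases are organized, the computation itself is short.
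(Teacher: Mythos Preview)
Your reading of ``orthogonal'' is the source of the trouble: in this paper (cf.\ Lemma~\ref{lem6} and the I-TOAR construction) ``$U_{k+1,1}$ is orthogonal'' means its columns are \emph{mutually orthogonal}, i.e.\ $U_{k+1,1}^\ast U_{k+1,1}$ is diagonal, not that they are orthonormal. With that reading $U_{k,1}^\ast U_{k,1}$ is merely diagonal, so $U_{k,2}^\ast U_{k,2}=I-U_{k,1}^\ast U_{k,1}$ is a nontrivial diagonal matrix rather than zero, and the ``too strong'' dead end you hit simply disappears. The paper then does \emph{not} proceed by induction on columns. It invokes the identity $U_{k,2}^\ast U_{k,1}=U_{k,2}^\ast U_{k,2}H_k+h_{k+1,k}U_{k,2}^\ast y_k e_k^\ast$ (equation~(\ref{new1}), obtained inside the proof of Lemma~\ref{lem4}); once $U_{k,2}^\ast y_k=0$ this reads $U_{k,2}^\ast U_{k,1}=(\text{diagonal})\cdot H_k$, hence upper Hessenberg, and since $U_{k,1}$ is upper Hessenberg while $U_{k,2}$ is upper triangular the paper concludes $U_{k,2}$ is diagonal. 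You never touch equation~(\ref{new1}), which is the paper's main lever.

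Your inductive route, even after correcting the meaning of ``orthogonal'', has a gap you flag but do not close: from $U_{k,2}^\ast y_k=0$ with $U_{k,2}$ diagonal you only kill the entries $(y_k)_i$ for which $(U_{k,2})_{ii}\neq 0$. The appeal to ``triangularity from Lemma~\ref{lem2}'' does not help --- upper triangularity of $U_{k+1,2}$ places no constraint whatsoever on the above-diagonal entries of its last column, so an entry $(y_k)_i$ with $i$ strictly below the column index and $(U_{k,2})_{ii}=0$ would create off-diagonal fill that your argument does not rule out. Making the induction work would require a separate argument that the diagonal of $U_{k,2}$ never vanishes, and that sends you back to the Arnoldi recurrence anyway.
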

\begin{proof}
As $U_{k+1,1}$ is an orthogonal matrix, its principal
submatrix $U_{k,1}$ also orthogonal. Further, orthonormality of the column vectors of $\begin{bmatrix} U_{k,1}\\U_{k,2} \end{bmatrix}$ implies $U_{k,2}^\ast U_{k,2}$ is a diagonal matrix. Recall from the equation (\ref{new1}) that $U_{k,2}^\ast U_{k,1} = U_{k,2}^ \ast U_{k,2}H_k,$ provided 
$U_{k,2}^\ast y_k= 0.$ Observe that $U_{k,2}^\ast y_k= 0$ follows from the Lemma-\ref{lem4}, and the equations (\ref{eq5}), (\ref{eq6}) on using the hypothesis that column vectors of $U_{k+1,1}$ are orthogonal.
%
Now, $U_{k,2}^\ast U_{k,1}$ is an upper Hessenberg matrix as $H_k$ and $U_{k,2}^\ast U_{k,2}$ are upper Hessenberg and diagonal matrices, respectively. 
Further, as $U_{k,1}$ is an upper
Hessenberg matrix from the Lemma-1, this is possible only when $U_{k,2}$ is a diagonal matrix. Therefore, the proof is complete.
\end{proof}

The above lemma is a base to propose I-TOAR  method in the next section for
constructing an orthonormal basis of a linear Krylov subspace
associated with the given quadratic problem.
\section{Implementation}
This section includes two subsections. The first subsection derives relations between entries of the matrices those involved in the two successive iterations of I-TOAR. The second subsection will discuss the Improved TOAR procedure to compute the compact Arnoldi factorization for the given QEP.
\subsection{Matrices in two successive iterations of I-TOAR} Assume that the
initial vectors $r_{-1}~\mbox{and}~ r_0$ are chosen randomly such that
$v_0:=\begin{bmatrix}r_0\\r_{-1}\end{bmatrix} \neq 0.$ 
The I-TOAR method computes a $QR$ decomposition of the $n
\times 2$ matrix $[r_{-1}~~r_0]:$
$$[r_{-1}~~r_0]=Q_1X,$$
where $Q_1$ is an orthonormal matrix of order $n \times \alpha$ and
$X$ is a matrix of size $\alpha \times 2.$ Here, $\alpha = 2,$ when the
vectors $r_{-1}$ and $r_0$ are linearly independent, otherwise
$\alpha=1.$ Now, define a matrix $V_1$ as follows:
$$V_1 = \frac{1}{\gamma}
\begin{bmatrix}r_0\\r_{-1}\end{bmatrix}= \frac{1}{\gamma} \begin{bmatrix}Q_1X( :
, 2) \\Q_1X( : , 1)\end{bmatrix} = \begin{bmatrix} Q_1&\\ & Q_1
\end{bmatrix} \begin{bmatrix} U_{1,1}\\ U_{1,2} \end{bmatrix} \equiv Q_{[1]}U_1,$$ where $\gamma = \|v_0\|_2,$ $U_{1,1} = X( : ,
2)/\gamma,$ and $U_{1,2} = X( : , 1)/\gamma$. After the
initialization, we have $Q_1, U_1$, and an empty $1 \times 0$
Hessenberg matrix $H_1 = [ ~~].$

\ni Assume that we have the compact Arnoldi decomposition
for $k=j,~j\geq 1.$ Similar to the TOAR method, I-TOAR enlarges this
decomposition for $k=j+1$ by 
computing an orthogonal matrix
$Q_{j+1}=[Q_j~~q_{j+1}]$ such that
\begin{equation}\label{new3}\relax
span\{Q_j,q_{j+1}\}=span\{Q_j,r\},~\mbox{where}~r=
AQ_jU_{j,1}(:,j)+BQ_jU_{j,2}(:,j).
\end{equation}
Thus, a vector $q_{j+1}$ is given by:
\begin{equation}\label{eq8}\relax
q_{j+1}= (r-Q_js)/\beta~~\mbox{ with}~ s = Q_j^*r,~ \beta = \|r -
Q_js\|_2.
\end{equation}
Here, it is assumed that $\beta \neq 0.$ Note that, if
$\beta =0,$ then deflation occurs and $r \in span\{Q_j\}.$ We state
this discussion  in the form of the following lemma.

\begin{lem}\label{lem8}\relax
Let for $i=j,j+1,$ column vectors of a matrix $Q_i$ form an orthonormal basis of a second-order Krylov subspace at the $i^{th}$ iteration of I-TOAR. Then, In case of no deflation, $Q_{j+1}=[Q_j~~q_{j+1}],$ where $q_{j+1}$ is given by the equation (\ref{eq8}). Otherwise $Q_{j+1}=Q_j.$
\end{lem}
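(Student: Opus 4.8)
The plan is to obtain Lemma~\ref{lem8} as an immediate consequence of Lemma~\ref{lem1} together with the elementary orthogonal-projection identity underlying the formula~(\ref{eq8}). First I would apply Lemma~\ref{lem1} with $k=j$: since, by hypothesis, the columns of $Q_j$ and of $Q_{j+1}$ are orthonormal bases of the $j$th and $(j+1)$st second-order Krylov subspaces, that lemma yields $\mathrm{span}\{Q_{j+1}\}=\mathrm{span}\{Q_j,r\}$ with $r=AQ_jU_{j,1}(:,j)+BQ_jU_{j,2}(:,j)$ --- precisely the vector in~(\ref{new3}). Hence the whole statement reduces to comparing $\mathrm{span}\{Q_j,r\}$ with $\mathrm{span}\{Q_j\}$, which is governed by whether the component of $r$ orthogonal to $\mathrm{span}\{Q_j\}$ vanishes, and Lemma~\ref{lem1}(a),(b) already tells us that at most one new column is ever adjoined.

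In the no-deflation case I would set $s=Q_j^{*}r$ and $\beta=\|r-Q_js\|_2$. Because the columns of $Q_j$ are orthonormal, $Q_js=Q_jQ_j^{*}r$ is the orthogonal projection of $r$ onto $\mathrm{span}\{Q_j\}$, so $r-Q_js$ is orthogonal to every column of $Q_j$; as $\beta\neq0$, the vector $q_{j+1}=(r-Q_js)/\beta$ is then a unit vector orthogonal to $\mathrm{span}\{Q_j\}$, and $[Q_j~~q_{j+1}]$ has orthonormal columns. Writing $r=Q_js+\beta q_{j+1}$ shows $r\in\mathrm{span}\{Q_j,q_{j+1}\}$, while $q_{j+1}\in\mathrm{span}\{Q_j,r\}$, so $\mathrm{span}\{Q_j,q_{j+1}\}=\mathrm{span}\{Q_j,r\}=\mathrm{span}\{Q_{j+1}\}$. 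Matching this with part~(b) of Lemma~\ref{lem1} (exactly one column appended to $Q_j$) identifies $Q_{j+1}$ with $[Q_j~~q_{j+1}]$, i.e.\ with~(\ref{eq8}).

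In the deflation case, $\beta=0$ forces $r-Q_js=0$, hence $r=Q_js\in\mathrm{span}\{Q_j\}$ and $\mathrm{span}\{Q_{j+1}\}=\mathrm{span}\{Q_j,r\}=\mathrm{span}\{Q_j\}$; this is exactly the situation of Lemma~\ref{lem1}(a), where no column is added, so $Q_j$ itself is the required orthonormal basis and $Q_{j+1}=Q_j$. I do not anticipate a real obstacle: the argument is bookkeeping around Lemma~\ref{lem1} and the projection formula. The only point deserving a line of care is to confirm that extending $Q_j$ by the single Gram--Schmidt vector $q_{j+1}$ (or by nothing) is \emph{forced}, not merely allowed --- that is, that the span equalities above, combined with the rigid dichotomy~(a)/(b) of Lemma~\ref{lem1}, leave no other possibility for $Q_{j+1}$ consistent with the hypothesis.
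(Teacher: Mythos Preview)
Your proposal is correct and follows the same approach as the paper. In fact, the paper does not give a formal proof of Lemma~\ref{lem8} at all: the lemma is simply stated as a summary of the preceding discussion (equations~(\ref{new3}) and~(\ref{eq8}) together with the remark that $\beta=0$ forces $r\in\mathrm{span}\{Q_j\}$), so your argument --- invoking Lemma~\ref{lem1} for the span identity and then carrying out the Gram--Schmidt bookkeeping --- is a more explicit version of what the paper leaves implicit.
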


The following lemma establishes relations between the matrices $U_i,~i=j,j+1$ in the I-TOAR method.

\begin{lem}\label{lem9}\relax
Let $U_{i,1},~U_{i,2},~i=j,j+1$ be matrices at the
iterations $j,j+1$ of I-TOAR and of the form  described in the
equation (\ref{eq6}). Then, entries of an upper Hessenberg matrix
$\underline{H_{j+1}}$ in I-TOAR satisfies the following relations:
\begin{equation}\label{eq9}\relax
 h_j=(U_{j,1}^\ast U_{j,1})^{-1}U_{j,1}^\ast s= (U_{j,2}^\ast U_{j,2})^{-1}U_{j,2}^\ast u=U_{j,1}^\ast s+U_{j,2}^\ast u,
\end{equation}
and
\begin{equation}\label{eq10}\relax
h_{j+1,j}^2=\|s-U_{j,1}h_j\|^2+\|u-U_{j,2}h_j\|^2+\beta^2.
\end{equation}
where $u= U_{j,1}(:,j).$
\end{lem}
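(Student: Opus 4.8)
The plan is to mimic the derivation of the corresponding TOAR relations but exploit the extra orthogonality structure that I-TOAR imposes, namely that $U_{j+1,1}$ has orthogonal columns, so that by Lemma~\ref{lem7} the matrix $U_{j,2}$ is diagonal (and in particular $U_{j,2}^\ast U_{j,2}$ and $U_{j,1}^\ast U_{j,1}$ are both diagonal, summing to the identity since $[U_{j,1};U_{j,2}]$ has orthonormal columns). First I would write down the compact Arnoldi relation being enlarged, i.e. the analogue of~(\ref{eq7}) at step $j$,
$$\begin{bmatrix}A&B\\ I&0\end{bmatrix}\begin{bmatrix}Q_jU_{j,1}\\ Q_jU_{j,2}\end{bmatrix}=\begin{bmatrix}Q_{j+1}U_{j+1,1}\\ Q_{j+1}U_{j+1,2}\end{bmatrix}\underline{H_{j+1}},$$
and read off its last block of $n$ rows to get $Q_jU_{j,1}=Q_{j+1}U_{j+1,2}\underline{H_{j+1}}$, hence (using the structure~(\ref{eq6}) of $U_{j+1,2}$ and $\underline{H_{j+1}}=[H_j;h_{j+1,j}e_j^\ast]$) the relation $U_{j,1}=U_{j,2}H_j+h_{j+1,j}y_je_j^\ast$; since $U_{k+1,1}$ orthogonal forces $y_j=0$ by Lemma~\ref{lem4} (this is exactly the argument already used inside the proof of Lemma~\ref{lem7}), the last column reads $u:=U_{j,1}(:,j)=U_{j,2}h_j$. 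Multiplying by $U_{j,2}^\ast$ and using that $U_{j,2}^\ast U_{j,2}$ is diagonal and invertible gives $h_j=(U_{j,2}^\ast U_{j,2})^{-1}U_{j,2}^\ast u$.

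Next I would extract from~(\ref{new3})--(\ref{eq8}) the first-block-of-$n$-rows relation $r=AQ_jU_{j,1}(:,j)+BQ_jU_{j,2}(:,j)=Q_{j+1}U_{j+1,1}\underline{H_{j+1}}$, i.e. $r=Q_{j+1}U_{j+1,1}[H_j;h_{j+1,j}e_j^\ast]$. Using the structure $U_{j+1,1}=[U_{j,1}\ x_j; 0\ \beta_j]$ of~(\ref{eq6}) and $Q_{j+1}=[Q_j\ q_{j+1}]$ with $q_{j+1}=(r-Q_js)/\beta$, $s=Q_j^\ast r$, one projects onto $Q_j$: since $Q_j^\ast Q_{j+1}=[I\ 0]$, $Q_j^\ast r=s$, and one obtains $s=U_{j,1}H_j(:,j)+h_{j+1,j}x_j=U_{j,1}h_j+h_{j+1,j}x_j$. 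Multiplying by $U_{j,1}^\ast$ and using orthogonality of the columns of $U_{j,1}$ together with $U_{j,1}^\ast x_j=-U_{j,2}^\ast y_j=0$ (Lemma~\ref{lem4}, and $y_j=0$) yields $U_{j,1}^\ast s=(U_{j,1}^\ast U_{j,1})h_j$, giving the first equality in~(\ref{eq9}); the third equality $h_j=U_{j,1}^\ast s+U_{j,2}^\ast u$ then follows by adding the two expressions and using $U_{j,1}^\ast U_{j,1}+U_{j,2}^\ast U_{j,2}=I$.

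For~(\ref{eq10}), I would take norms in the two decompositions $s=U_{j,1}h_j+h_{j+1,j}x_j$ and $u=U_{j,2}h_j+h_{j+1,j}y_j$; but with $y_j=0$ the second collapses, so I instead go back one level. The clean route is to use the orthonormality of the $(j+1)$st column of $V_{j+1}=[Q_{j+1}U_{j+1,1};Q_{j+1}U_{j+1,2}]$ together with the Arnoldi relation: the last column of~(\ref{eq7}) applied at step $j$ reads $\big(\begin{smallmatrix}r\\ Q_jU_{j,1}(:,j)\end{smallmatrix}\big)=V_{j+1}\underline{H_{j+1}}(:,j)=V_jh_j+h_{j+1,j}v_{j+1}$, where $v_{j+1}$ is the last (unit, and orthogonal to $\mathrm{range}\,V_j$) column. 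Taking squared norms and subtracting $\|V_jh_j\|^2=\|h_j\|^2$ gives $h_{j+1,j}^2=\|r\|^2+\|Q_jU_{j,1}(:,j)\|^2-\|h_j\|^2=\|r\|^2+\|u\|^2-\|h_j\|^2$ (using $\|Q_j\xi\|=\|\xi\|$ since $Q_j$ is orthonormal). Finally I rewrite the right-hand side: $\|r\|^2=\|Q_js+\beta q_{j+1}\|^2=\|s\|^2+\beta^2$, and $\|s\|^2-\|h_j\|^2$ expands, via $s=U_{j,1}h_j+h_{j+1,j}x_j$ and $u=U_{j,2}h_j$ plus the Pythagorean identities $\|h_j\|^2=\|U_{j,1}h_j\|^2+\|U_{j,2}h_j\|^2$ (from $U_{j,1}^\ast U_{j,1}+U_{j,2}^\ast U_{j,2}=I$) and $U_{j,1}^\ast(s-U_{j,1}h_j)=0$, into $\|s-U_{j,1}h_j\|^2-\|U_{j,2}h_j\|^2=\|s-U_{j,1}h_j\|^2-\|u-U_{j,2}h_j\|^2$... which forces me to keep careful track of which terms vanish; after collecting, $\|r\|^2+\|u\|^2-\|h_j\|^2=\|s-U_{j,1}h_j\|^2+\|u-U_{j,2}h_j\|^2+\beta^2$ as claimed. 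The main obstacle I anticipate is precisely this last bookkeeping step: making sure the cancellations among $\|s\|^2,\|h_j\|^2,\|u\|^2$ are done consistently with the (deflation-free) structure $y_j=0$, and that the symmetric-looking final form is not an artifact of adding a zero term $\|u-U_{j,2}h_j\|^2=0$. If that symmetry is indeed cosmetic, I would simply note it; otherwise the identity must be read as valid also in the borderline/deflation cases where $U_{j,2}$ is not exactly diagonal, and I would present the general computation that does not presuppose $y_j=0$.
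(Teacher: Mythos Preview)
Your derivation of~(\ref{eq9}) follows the same line as the paper's, but it rests on a false intermediate claim: Lemma~\ref{lem4} does \emph{not} give $y_j=0$. From the orthogonality of $U_{j+1,1}$ you correctly get $U_{j,1}^\ast x_j=0$, and Lemma~\ref{lem4} then yields only $U_{j,2}^\ast y_j=0$; since $U_{j,2}$ is a tall $\eta_j\times j$ matrix this does not force $y_j=0$. (In fact in I-TOAR the vector $y_j$ generically has a single nonzero entry in position $\eta_j$, which is precisely what makes $U_{j+1,2}$ diagonal.) Fortunately your argument for~(\ref{eq9}) only \emph{needs} $U_{j,2}^\ast y_j=0$: starting from $u=U_{j,2}h_j+h_{j+1,j}y_j$ and left-multiplying by $U_{j,2}^\ast$ kills the $y_j$ term and gives $U_{j,2}^\ast u=U_{j,2}^\ast U_{j,2}h_j$, exactly as the paper does. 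So~(\ref{eq9}) survives once you state the correct hypothesis.

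For~(\ref{eq10}) your hesitation (``with $y_j=0$ the second collapses'') is entirely an artifact of that error, and the term $\|u-U_{j,2}h_j\|^2=h_{j+1,j}^2\|y_j\|^2$ is \emph{not} cosmetic. The paper's route here is much shorter than your $V$-level norm computation: from the identities you already have,
\[
s-U_{j,1}h_j=h_{j+1,j}x_j,\qquad u-U_{j,2}h_j=h_{j+1,j}y_j,\qquad \beta=h_{j+1,j}\beta_j,
\]
one simply observes that $(x_j^\ast,\beta_j,y_j^\ast,0)^\ast$ is the $(j{+}1)$-st column of the orthonormal matrix $\begin{bmatrix}U_{j+1,1}\\U_{j+1,2}\end{bmatrix}$ and hence has unit norm; squaring and summing gives~(\ref{eq10}) in one line. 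Your alternative via $h_{j+1,j}^2=\|r\|^2+\|u\|^2-\|h_j\|^2$ is also valid (the cancellation you worried about is clean once you use the two orthogonality relations $U_{j,1}^\ast(s-U_{j,1}h_j)=0$ and $U_{j,2}^\ast(u-U_{j,2}h_j)=0$ together with $U_{j,1}^\ast U_{j,1}+U_{j,2}^\ast U_{j,2}=I$), but it is the long way around.
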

\begin{proof}
\ni As column vectors of the matrix $Q_{k+1}$ are orthonormal, observe the following relation from the equations (\ref{eq7}) and (\ref{new3}):
\begin{equation}\label{eq11}\relax
\begin{bmatrix}Q_{j+1}^\ast r\\Q_{j+1}^\ast Q_j U_{j,1}(;,j)
\end{bmatrix}= \begin{bmatrix} U_{j+1,1}\\U_{j+1,2}
\end{bmatrix}\underline{H_{j+1}}(:,j).
\end{equation}
The I-TOAR method uses this relation 
to compute the matrices $U_{j+1,1}$ and $U_{j+1,2},$ 
which are of the following form:
$$U_{j+1,1}=\begin{bmatrix} U_{j,1}&x_{j}\\0&\beta_{j}
\end{bmatrix} ~\mbox{and}~U_{j+1,2}=\begin{bmatrix}U_{j,2}&y_{j}\\0&0\end{bmatrix}.$$
From the previous two equations, it is clear that matrix $U_{j,1},$
$x_{j},$ and $\beta_{j}$ satisfy the following equation:
\begin{equation}\label{eq11a}\relax
Q_{j+1}^\ast r := \begin{bmatrix}s \\ \beta \end{bmatrix} =
\begin{bmatrix} U_{j,1}h_j+h_{j+1,j}x_{j}\\h_{j+1,j} \beta_{j}
\end{bmatrix},
\end{equation}
where $h_j={H}_{j+1}(1:j,j)$ and $h_{j+1,j}={H}_{j+1}(j+1,j).$
Since the matrix $U_{j+1,1}$ is orthogonal in I-TOAR, the two previous equations together implies $U_{j,1}^\ast x_{j}=0,$ and
\begin{equation}\label{eq12}\relax
U_{j,1}^\ast s = U_{j,1}^\ast U_{j,1}h_j.
\end{equation}
Similarly, comparing the last
row of the equation  (\ref{eq11}) gives the following using an orthonormal property of the matrix $Q_{j+1},$ and the structure of $U_{j+1,2}:$
\begin{equation}\label{eq12a}\relax
\begin{bmatrix} u\\0\end{bmatrix}=\begin{bmatrix}
U_{j,2}h_j+h_{j+1,j}y_{j}\\0\end{bmatrix},
\end{equation}
 where $u\equiv
U_{j,1}(:,j).$ By using the Lemma-\ref{lem4}  and $U_{j,1}^\ast x_{j}=0,$ 
this gives
\begin{equation}\label{eq13}\relax
U_{j,2}^\ast u = U_{j,2}^\ast U_{j,2}h_j.
\end{equation}
Now adding the equations  (\ref{eq12}) and (\ref{eq13}) based on the fact that column vectors of $\begin{bmatrix}U_{j,1}\\U_{j,2}\end{bmatrix}$ are orthonormal gives the following relation:
\begin{equation}\label{eq13b}\relax
h_j= U_{j,1}^\ast s+U_{j,2}^\ast u.
\end{equation}
Similarly, recall the following from the equations
(\ref{eq11a}) and (\ref{eq12a}):
\begin{equation}\label{eq13a}\relax
s-U_{j,1}h_j = h_{j+1,j}x_{j+1}~~ \mbox{and}~~ u-U_{j,2}h_j=
h_{j+1,j}y_{j+1}.
\end{equation}
By using the fact that $(x_{j+1}~\beta_{j+1}~y_{j+1}~0)'$ is a column vector of an orthonormal matrix $\begin{bmatrix}U_{j+1,1}\\U_{j+1,2} \end{bmatrix},$ It gives the following: 
$$ \|s-U_{j,1}h_j\|^2+\|u-U_{j,2}h_j\|^2+h_{j+1,j}^2 \beta_{j+1}^2=h_{j+1,j}^2.$$
Hence, the equation (\ref{eq10}) is proved by observing from the equation
(\ref{eq11a}) that $h_{j+1,j}\beta_{j+1} =\beta.$ 
Similarly, observe that the equation (\ref{eq9}) follows from the equations
(\ref{eq12}), (\ref{eq13}), and (\ref{eq13b}). Therefore, the proof is complete.
\end{proof}

The Lemma-\ref{lem9} gives the relations to transit from $j^{th}$ to $(j+1)^{th}$ iteration in I-TOAR, provided there is no deflation  at the $(j+1)^{th}$ iteration. In the following, we derive similar expressions for computing the matrix $\underline{H_{j+1}}$ in
I-TOAR, in case of deflation.

\begin{lem}\label{lem10}\relax
Let $\mbox{for}~i=j,j+1,$ $U_{i,1}$ and $U_{i,2}$ be matrices same as in the
Lemma-\ref{lem9}, but of the form described in the equation (\ref{eq5}). Then,
entries of an upper Hessenberg matrix $\underline{H_{j+1}}$ in I-TOAR satisfies the equation (\ref{eq9}), and also the following one:
\begin{equation}\label{new4}\relax
h_{j+1,j}^2 = \|s-U_{j,1}h_j\|^2+\|u-U_{j,2}h_j\|^2 =  \|(I-U_{j,1}U_{j,1}^\dag) s\|^2+\|(I-U_{j,2}U_{j,2}^\dag)u\|^2.
\end{equation}
\end{lem}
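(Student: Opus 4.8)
The plan is to follow the proof of Lemma~\ref{lem9} line by line, substituting everywhere the deflation structure~(\ref{eq5}) for~(\ref{eq6}). Only two things genuinely change: in the deflation case $\beta=0$, so the term $\beta^2$ that appears in~(\ref{eq10}) drops out; and the column freshly appended to $\begin{bmatrix}U_{j+1,1}\\U_{j+1,2}\end{bmatrix}$ is simply $\begin{bmatrix}x_j\\y_j\end{bmatrix}$, with no trailing $\beta_j$ and no trailing zero, so the statement that this column is a unit vector reads $\|x_j\|^2+\|y_j\|^2=1$.

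First I would extract the $j$-th column of the compact Arnoldi relation~(\ref{eq7}). Its left-hand side is $\begin{bmatrix}r\\Q_j u\end{bmatrix}$ with $r$ as in~(\ref{new3}) and $u=U_{j,1}(:,j)$; multiplying the resulting identity on the left by $Q_{j+1}^\ast$, and using $Q_{j+1}=Q_j$ (deflation) together with $Q_j^\ast r=s$, yields
$$s=U_{j,1}h_j+h_{j+1,j}x_j,\qquad u=U_{j,2}h_j+h_{j+1,j}y_j,$$
where $\underline{H_{j+1}}(:,j)=\begin{bmatrix}h_j\\h_{j+1,j}\end{bmatrix}$ as in~(\ref{eq11a}). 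These are the deflation analogues of~(\ref{eq11a})--(\ref{eq12a}).

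To obtain~(\ref{eq9}) I would use that $U_{j+1,1}$ is orthogonal in I-TOAR, whence $U_{j,1}^\ast x_j=0$, and that the columns of $\begin{bmatrix}U_{j+1,1}\\U_{j+1,2}\end{bmatrix}$ are orthonormal, so that the last column is orthogonal to the first $j$, giving $U_{j,1}^\ast x_j+U_{j,2}^\ast y_j=0$ and hence $U_{j,2}^\ast y_j=0$ (cf.\ Lemma~\ref{lem4}). Multiplying the two displayed relations on the left by $U_{j,1}^\ast$ and by $U_{j,2}^\ast$ then gives $U_{j,1}^\ast s=U_{j,1}^\ast U_{j,1}h_j$ and $U_{j,2}^\ast u=U_{j,2}^\ast U_{j,2}h_j$; inverting these Gram matrices produces the first two formulas of~(\ref{eq9}), and adding the two relations together with $U_{j,1}^\ast U_{j,1}+U_{j,2}^\ast U_{j,2}=I$ produces $h_j=U_{j,1}^\ast s+U_{j,2}^\ast u$. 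For~(\ref{new4}) I would rewrite the displayed relations as $s-U_{j,1}h_j=h_{j+1,j}x_j$ and $u-U_{j,2}h_j=h_{j+1,j}y_j$, take squared norms, add, and use $\|x_j\|^2+\|y_j\|^2=1$ to get $h_{j+1,j}^2=\|s-U_{j,1}h_j\|^2+\|u-U_{j,2}h_j\|^2$; the last equality in~(\ref{new4}) then follows because $U_{j,1}h_j=U_{j,1}(U_{j,1}^\ast U_{j,1})^{-1}U_{j,1}^\ast s=U_{j,1}U_{j,1}^\dag s$ and $U_{j,2}h_j=U_{j,2}U_{j,2}^\dag u$ are the orthogonal projections of $s$ and $u$ onto the column spaces of $U_{j,1}$ and $U_{j,2}$, so that $s-U_{j,1}h_j=(I-U_{j,1}U_{j,1}^\dag)s$ and $u-U_{j,2}h_j=(I-U_{j,2}U_{j,2}^\dag)u$.

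The step I expect to be the main obstacle is justifying that the Gram matrices $U_{j,1}^\ast U_{j,1}$ and $U_{j,2}^\ast U_{j,2}$ are invertible, i.e.\ that $U_{j,1}$ and $U_{j,2}$ have full column rank, so that the inversions and the pseudoinverse identities above are legitimate; this is more delicate in the deflation case than in Lemma~\ref{lem9}, because by Lemma~\ref{lem7} $U_{j,2}$ is diagonal and one must rule out a vanishing diagonal entry. A minor bookkeeping point is to carry along the standing no-breakdown assumption $h_{j+1,j}\neq0$ inherited from the setup of Lemma~\ref{lem1}, which is what makes the rearranged relations in the last step meaningful.
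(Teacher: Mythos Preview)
Your proposal is correct and follows essentially the same route as the paper: derive the two relations $s=U_{j,1}h_j+h_{j+1,j}x_j$ and $u=U_{j,2}h_j+h_{j+1,j}y_j$ from the $j$-th column of~(\ref{eq11}) using $Q_{j+1}=Q_j$, use $U_{j,1}^\ast x_j=0=U_{j,2}^\ast y_j$ to obtain~(\ref{eq9}), and use $\|x_j\|^2+\|y_j\|^2=1$ for~(\ref{new4}). The only cosmetic difference is that the paper invokes Lemma~\ref{lem4} for $U_{j,2}^\ast y_j=0$, whereas you obtain it directly from the orthonormality of the columns of $\begin{bmatrix}U_{j+1,1}\\U_{j+1,2}\end{bmatrix}$ combined with $U_{j,1}^\ast x_j=0$; your concern about the invertibility of $U_{j,2}^\ast U_{j,2}$ in the deflation case is a fair one that the paper leaves implicit.
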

\begin{proof}
Recall from the Lemma-\ref{lem1} that in the case of deflation, $Q_{j+1}=Q_j,$ and $U_{j+1,1,}$ $U_{j+1,2}$ are of the following form:
$$U_{j+1,1}= [ U_{j,1}~~x_{j} ]~~\mbox{and}~~U_{j+1,2} = [U_{j,2}~~y_{j}].$$
These equations together with the equation (\ref{eq11}) gives the following relations:
\begin{equation}\label{eq14a}\relax
Q_{j+1}^\ast r = Q_j^\ast r = s=U_{j,1}h_j+h_{j+1,j}x_{j},
\end{equation}
and
\begin{equation}\label{eq15a}\relax
 U_{j,1}(:,j)=u=U_{j,2}h_j+h_{j+1,j}y_{j}.
\end{equation}
These relations are similar to the equations (\ref{eq11a}) and (\ref{eq12a}) in the previous lemma. As $\begin{pmatrix} x_{j+1}\\y_{j+1} \end{pmatrix}$ is a column vector of an orthonormal matrix $\begin{bmatrix}U_{j+1,1}\\U_{j+1,2} \end{bmatrix},$ we have $\|x_{j+1}\|^2+\|y_{j+1}\|^2=1.$ Using this, the previous two equations gives:
$h_{j+1,j}^2 = \|s-U_{j,1}h_j\|^2+\|u-U_{j,2}h_j\|^2.$ Now, see the following relations to observe this is equal to $\|(I-U_{j,1}U_{j,1}^\dag) s\|^2+\|(I-U_{j,2}U_{j,2}^\dag)u\|^2.$
\begin{equation}\label{eq14}\relax
U_{j,1}^\ast s = U_{j,1}^\ast U_{j,1}h_j~~\mbox{and}~~U_{j,2}^\ast u = U_{j,2}^\ast U_{j,2}h_j.
\end{equation}
The above equation follows from the equations (\ref{eq14a}) and (\ref{eq15a}) by using the identities $U_{j,1}^\ast x_j=0=U_{j,2}^\ast y_j.$ These identities follows from the fact that the matrix $U_{j+1,1}$ is orthogonal, and the Lemma-\ref{lem4}. Therefore, we proved (\ref{new4}). Now, it is required to prove the equation (\ref{eq9}).


\ni As the matrix $\begin{bmatrix}U_{j,1}\\U_{j,2}
\end{bmatrix}$ has orthonormal columns, $U_{j,1}^\ast U_{j,1}+U_{j,2}^\ast U_{j,2}$ is an Identity matrix. Using this, the equation
(\ref{eq14}) gives
$U_{j,1}^\ast s+U_{j,2}^\ast u= h_j.$ In turn, this equation together with (\ref{eq14}) proves the equation (\ref{eq9}). Hence, the proof is over.
\end{proof}

 The equation (\ref{new4}) in the Lemma-\ref{lem10} shows that $h_{j+1,j}^2=0,$ that means, the I-TOAR algorithm break down when the vectors $s=Q_j^\ast r$ and $u$ are in the range space of
matrices $U_{j,1}$ and $U_{j,2},$ respectively. Otherwise, since the column vectors of $Q_j$ are orthonormal, the equation (\ref{eq14}) shows that the vector $h_j$ is the least squares approximation to the vectors $s$ and $u$ from the range space of $Q_jU_{j,1}$ and $Q_jU_{j,2},$ respectively.

\subsection{I-TOAR implementation}
This subsection uses the results of the subsection-4.1 to discuss computational details of I-TOAR. Then, it proposes the I-TOAR algorithm.
Though I-TOAR follows the TOAR for
constructing an orthonormal basis of the second-order Krylov
subspace, 
I-TOAR can build an
orthonormal basis for the associated linear Krylov subspace in various
ways. For example, the following is the one which requires
the least computation compared to all other procedures.

\ni \textbf{Procedure-1:} Use the second equality relation of the equation (\ref{eq9}) to compute the vector $h_j.$  It requires at most $3(j+1)$ flops. Now, use first relation in the equation
(\ref{eq13a}) to compute the vector $h_{j+1,j}x_{j+1},$ and then comparing only the last element in the second relation of the equation (\ref{eq13a}) gives the vector $h_{j+1,j}y_{j+1}.$  Note that, $y_{j+1}$ is a column vector of a diagonal matrix $U_{j+1,2}$ and has only one non-zero entry. This approach takes at most $(j+1)^2+2(j+1)$ flops to compute $h_{j+1,j}x_{j+1},$ and $h_{j+1,j}y_{j+1}.$ After that, as already $\beta$ is known from the equation (\ref{eq11a}), it requires
another $2(j+1)$ flops to compute $h_{j+1,j}.$ Then, scaling of the vectors $h_{j+1,j}x_{j+1},$ and $h_{j+1,j}y_{j+1}$ require $2(j+1)$ flops. Thus, overall computational cost is at most $(j+1)^2+9(j+1)$ flops for this procedure.

However, when diagonal elements of $U_{j,2}$ are smaller in
magnitude, the division operation in the equation
(\ref{eq13}) introduce large floating point arithmetic errors into this procedure.

%
\ni \textbf{Procedure-2:} Instead of the second relation as in the Procedure-1, use the first relation of the equation (\ref{eq9}) for computing $h_j.$ Then, follow the Procedure-1 to compute $x_{j+1}, y_{j+1},$ and $h_{j+1,j}.$ The computational cost of this procedure is equal to that of the Procedure-1. 

Observe that the Procedures- 1 and 2 requires the
less computation compared to $6(j+1)^2+3(k+1)$ flops required by TOAR to complete the same task(See, \cite{stabtoar}). However, the computed column vectors of  $U_{j,1}$ are may not orthogonal, as both the procedures do not  orthogonalize these vectors explicitly.
The following algorithm uses the Modified Gram-Schmidt(MGS) process to do  explicit orthogonalization. In the
steps (k)-(n), it orthogonalizes the vector $s=Q_j^\ast r$ against
all the columns of $U_{j,1}$ to compute the vector $x_{j+1}.$ Though MGS increase the computation cost, the overall computation cost in the following algorithm to perform the task that of the Procedure-1 is $5(j+1)^2+4(j+1)$ flops. Still, it is less compared to the TOAR method.

%
%

\begin{algorithm}[!htb]
\caption{I-TOAR} \label{Improved TOAR method}
\begin{enumerate}
\item [1.] {\em Start:}Matrices $A$, $B$ and initial length n-vectors
$r_{-1}$ and $r_0$ with $\begin{pmatrix}r_{-1}\\r_0 \end{pmatrix}
\neq 0.$
\item [2.] {\em Output:} $Q_k \in \mathbb{R}^{n \times \alpha_k}$,
$U_{k,1},U_{k,2} \in \mathbb{R}^{\alpha_k \times n},$ and
$\underline{H_k}=\{h_{i,j}\}\in \mathbb{R}^{k \times k-1}.$
\begin{enumerate}[(a)]
\item rank revealing QR: $[r_{-1} ~r_0]=QX$ with $\alpha_1$ being the
rank.
\item Initialize $Q_1=Q,$ $U_{1,1}=X(:,2)/\gamma$ and $U_{1,2}=X(:,1)/\gamma.$
\item for j=1,2,\ldots,k-1 do
\item $r=A(Q_jU_{j,1}(:,j))+B(Q_jU_{j,2}(:,j))$
\item for i= 1,\ldots,$\alpha_j$ do
\item $~~~s_i=q_i^\ast r$
\item $~~~r=r-s_iq_i$
\item end for
\item $\beta = \|r\|_2$
\item Set $s= x:=[s_1 \cdots s_{\alpha_j}]^T$ and $u=U_{j,1}(:,j)$
\item for t=1,2, \ldots,i
\item $ \gamma_1(t) = U_{j,1}(:,t)^*s/\|U_{j,1}(:,t)\|^2;$
\item $s= s-\gamma_1(t) s$
\item end for
\item $h= U_{j,1}^\ast (s-x)+U_{j,2}^\ast U_{j,1}(:,j);~~u(\alpha_j)= u(\alpha_j)-e_{\alpha_j}^\ast U_{j,2}h;$
\item for t=1, 2, \ldots, $\alpha_{j}-1$
\item u(t)=0
\item end for
\item $h_{j+1,j}=(\beta^2+\|s\|_2^2+\|u\|_2^2)^{1/2}$
\item if $h_{j+1,j}=0$ then stop(breakdown) end if
\item if $\beta =0$ then
 $\alpha_{j+1}=\alpha_j $ (deflation)
\item $Q_{j+1}=Q_j;$ $U_{j+1,1} = [U_{j,1}~~s/h_{j+1,j}];$
$U_{j+1,2} =[U_{j,2}~~u/h_{j+1,j}]$
\item else
 $\alpha_{j+1}=\alpha_j+1$
\item $Q_{j+1}=[Q_j~\frac{r}{ \beta}];\\
U_{j+1,1}=\begin{bmatrix} U_{j,1}&s/h_{j+1,j}\\0&\beta/h_{j+1,j}
\end{bmatrix};$ $U_{j+1,2}=\begin{bmatrix} U_{j,2}&u/h_{j+1,j}\\0&0
\end{bmatrix};$
\item end if
\item end for
\end{enumerate}
\end{enumerate}
\end{algorithm}

Similar to the TOAR algorithm in \cite{stabtoar}, the Algorithm-\ref{Improved
TOAR method} also uses the MGS process to maintain orthonormality of the matrix $Q_j.$ The computed $Q_j$ may not orthonormal up to machine precision. To keep the level of orthonormality of $Q_j$ as close to machine precision as possible it is required to reorthogonalize a vector $r$ in the step-(g) against the columns of $Q_j$ by inserting the following code segments between the steps (e) and (h) of the Algorithm-\ref{Improved TOAR method}. 
\begin{center}
for i = 0, . . . , $\eta_j$ \\
$\tilde{s}_i = q^\ast_i r~~~~$\\
$r = r-\tilde{s}_i q_i$\\ $s_i=\tilde{s}_i + s_i ~$\\
end for~~~~~~~~~~~~~~~~\\
$\alpha=\|r\|_2~.$
\end{center}
To end this section, note that it would be possible to apply a similar reorthogonalization procedure between the steps (k) and (n) of the Algorithm-1, to keep the orthogonality of columns of the matrix $U_{j,1},$ close to machine precision. In the next section, we provide a rigorous backward error analysis of the
Algorithm-\ref{Improved TOAR method}.

\section{Backward Error Analysis}
This section provides backward error analysis of the I-TOAR
algorithm, in the presence of finite precision arithmetic. 
The backward error analysis for the associated linear Krylov
subspace in TOAR presented in \cite{stabtoar}, is also valid for I-TOAR with insignificant changes. So, this section provides backward error
analysis only for the second-order Krylov subspace in I-TOAR, in terms of the matrix pair $(A,B).$ That means, this section proves that the
computed basis $\hat{Q}_j$ in I-TOAR is an exact basis matrix of
${G}_k(A+\triangle A,B+\triangle B;r_{-1},r_0)$ with small
$\|[\triangle A~\triangle B]\|_2.$  Note that, for TOAR this is an open problem; See \cite{stabtoar}.

Let us assume that by taking the floating point errors into account, the  Compact Arnoldi decomposition computed by I-TOAR satisfies the following:
$$\begin{bmatrix}A&B\\I&0\end{bmatrix}\begin{bmatrix}\bm\hat{Q}_{k-1}\bm\hat{U}_{k-1,1}\\\bm\hat{Q}_{k-1}\bm\hat{U}_{k-1,2}\end{bmatrix}=\begin{bmatrix}\bm\hat{Q}_{k}\bm\hat{U}_{k,1}\\\bm\hat{Q}_{k}\bm\hat{U}_{k,2}\end{bmatrix}\bm\hat{\underline{H_k}}+E,$$
where matrices with $~~\bm\hat{}~~$ on the top are the computed
matrices counterpart to the matrices in the exact arithmetic, and $E$
is the error matrix. Now,
we introduce two
matrices $F_{mv}$ and $F$ that represent floating point error of matrix-vector product and orthogonalization process, respectively.
\begin{equation}\label{eq16}\relax
F_{mv}=A(\bm\hat{Q}_{k-1}\bm\hat{U}_{k-1,1})+B(\bm\hat{Q}_{k-1}\bm\hat{U}_{k-1,2})-\bm\hat{R}_{k-1},
\end{equation}
\begin{equation}\label{eq17}\relax
F:= \begin{bmatrix} F_1\\F_2\end{bmatrix} = \begin{bmatrix}
\bm\hat{R}_{k-1}
\\\bm\hat{Q}_{k-1}\bm\hat{U}_{k-1,1}\end{bmatrix}-\begin{bmatrix}\bm\hat{Q}_{k}\bm\hat{U}_{k,1}\\\bm\hat{Q}_{k}\bm\hat{U}_{k,2}\end{bmatrix}\bm\hat{\underline{H_k}},
\end{equation}
where $\bm\hat{R}_{k-1}=[\hat{r}_1,\hat{r}_2,\cdots,\hat{r}_{k-1}],$
and $\hat{r}_j =
fl(A\bm\hat{Q}_{j}\bm\hat{U}_{j,1}(:,j))+B\bm\hat{Q}_{j}\bm\hat{U}_{j,2}(:,j)).$

Note that the matrices $\triangle A_1:= -F_{mv}\alpha (\bm\hat{Q}_{k-1}\bm\hat{U}_{k-1,1})^\dag$ and $\triangle B_1:=-F_{mv} (1-\alpha)(\bm\hat{Q}_{k-1}\bm\hat{U}_{k-1,2})^\dag$ satisfy the relation
$$(A+\triangle A_1)(\bm\hat{Q}_{k-1}\bm\hat{U}_{k-1,1})+(B+\triangle B_1)(\bm\hat{Q}_{k-1}\bm\hat{U}_{k-1,2})=\bm\hat{R}_{k-1},$$
where $\alpha$ is some non-zero scalar.
Then, using the equations (\ref{eq16}) and (\ref{eq17}) the matrices $\triangle A_1$ and $\triangle B_1$  satisfy the following relations, respectively:
$$\triangle A_1(\bm\hat{Q}_{k-1}\bm\hat{U}_{k-1,1})+\triangle B_1
(\bm\hat{Q}_{k-1}\bm\hat{U}_{k-1,2}) = -F_{mv},$$
and 
$$(A+\triangle A_1)(\bm\hat{Q}_{k-1}\bm\hat{U}_{k-1,1})+(B+\triangle B_1)(\bm\hat{Q}_{k-1}\bm\hat{U}_{k-1,2})-\bm\hat{Q}_{k}\bm\hat{U}_{k,1}\hat{\underline{H_k}}=F_1.$$
Thus, from these two equations, it is easy to see that
$F_1+F_{mv}$ is the overall error matrix in the orthogonalization
process. Furthermore, on the introduction of two matrices $\triangle
A_2:= F_{1}\alpha (\bm\hat{Q}_{k-1}\bm\hat{U}_{k-1,1})^\dag$ and
$\triangle B_2:=
F_{1}(1-\alpha)(\bm\hat{Q}_{k-1}\bm\hat{U}_{k-1,2})^\dag,$ the previous
equation becomes as follows:
\begin{equation}\label{eq17c}\relax
(A+\triangle A_1+\triangle
A_2)(\bm\hat{Q}_{k-1}\bm\hat{U}_{k-1,1})+(B+\triangle B_1+\triangle
B_2)(\bm\hat{Q}_{k-1}\bm\hat{U}_{k-1,2})=\bm\hat{Q}_{k}\bm\hat{U}_{k,1}\hat{\underline{H_k}}.
\end{equation}
Now, define $[\triangle A~~\triangle
B]:=[\triangle A_1+\triangle A_2~~\triangle B_1+\triangle B_2].$
Then, the equation (\ref{eq17c}) shows that the basis matrix $\bm\hat{Q}_k$ computed in the I-TOAR algorithm is an exact basis matrix of a second-order Krylov subspace ${G}_k(A+\triangle A,B+\triangle B;r_{-1},r_0).$ 
Next, to prove the I-TOAR algorithm is backward stable, it is
required that the relative backward error
$\frac{\|[\triangle A~~\triangle B]\|}{\|[A~~B]\|}$ is of the order
of the machine precision $\varepsilon.$ To verify this, we need to derive
upper bounds for $\|F_{mv}\|_F$ and $\|F_1\|_F.$ 

We adopt the following standard  model for rounding errors in the floating point arithmetic. Let $\alpha$ and $\beta$ be any two real scalars. Then,
$$fl(\alpha~~op~~\beta) =(\alpha ~~op~~\beta)(1+\delta)~~\mbox{with}~~|\delta| \leq \varepsilon~~\mbox{for}~~op=+,-,*,/,  $$
where $fl(x)$ denotes the computed quantity, and $\varepsilon$
denotes the machine precision. We will use the following lemma also in the backward error analysis.  
\begin{lem}\label{lem4.1}\relax
(a). For $x,y \in \mathcal{R}^n,~fl(x+y)~=~x+y+f,$ where $\|f\|_2
\leq (\|x\|_2+\|y\|_2).$\\
(b). For $X \in \mathcal{R}^{n \times k}$ and $y \in \mathcal{R}^k$,
$fl(Xy)~=~Xy+w,$ where $\|w\|_2
\leq k\|X\|_F\|y\|_2 \varepsilon+\mathcal{O}(\varepsilon^2).$\\
(c). For $X \in \mathcal{R}^{n \times k},$ $y \in \mathcal{R}^k,~b
\in \mathcal{R}^n,$ and $\beta \in \mathcal{R},$ $\hat{c} \equiv
fl((b-Xy)/\beta)$ satisfies
$$\beta \hat{c} = b-Xy+g, ~~ \|g\|_2 \leq (k+1)\|[X~~\hat{c}]\|_F \Big\|\begin{bmatrix}y \\ \beta \end{bmatrix}\Big\|_2 \varepsilon+\mathcal{O}(\varepsilon^2). $$
\end{lem}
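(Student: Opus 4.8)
The statement to prove is Lemma~\ref{lem4.1}, the standard rounding-error bounds for vector addition, matrix-vector products, and the ``normalize a residual'' operation $(b-Xy)/\beta$. These are textbook-style results (cf.\ Higham), so the plan is to verify each part by direct accumulation of the elementary rounding-error model $fl(\alpha~op~\beta)=(\alpha~op~\beta)(1+\delta)$, $|\delta|\le\varepsilon$, stated just above.

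For part (a), I would write $fl(x+y)$ componentwise: $fl(x_i+y_i)=(x_i+y_i)(1+\delta_i)$ with $|\delta_i|\le\varepsilon$, so $f_i=(x_i+y_i)\delta_i$ and hence $|f_i|\le(|x_i|+|y_i|)\varepsilon$. Taking Euclidean norms and using the triangle inequality $\||x|+|y|\|_2\le\|x\|_2+\|y\|_2$ gives $\|f\|_2\le(\|x\|_2+\|y\|_2)\varepsilon$ (the $\varepsilon$ being absorbed into the bound exactly as written in the statement). For part (b), I would evaluate $Xy=\sum_{j=1}^k x_{:,j}y_j$ as a sequence of $k$ scalar multiplications followed by $k-1$ additions; each multiplication and each addition contributes a relative perturbation bounded by $\varepsilon$, so after the standard $(1+\delta)$-collapsing argument the accumulated factors become $1+\theta$ with $|\theta|\le k\varepsilon+\mathcal{O}(\varepsilon^2)$. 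Then $\|w\|_2\le\sum_j |y_j|\,\|x_{:,j}\|_2\,k\varepsilon+\mathcal{O}(\varepsilon^2)\le k\|X\|_F\|y\|_2\varepsilon+\mathcal{O}(\varepsilon^2)$ by Cauchy--Schwarz over the index $j$.

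For part (c), I would first apply part (b) to $Xy$: $fl(Xy)=Xy+w_1$ with $\|w_1\|_2\le k\|X\|_F\|y\|_2\varepsilon+\mathcal{O}(\varepsilon^2)$. Next, subtracting from $b$ introduces (by part (a), or one more $(1+\delta)$ per component) an error $w_2$ with $\|w_2\|_2\le(\|b\|_2+\|Xy\|_2)\varepsilon+\mathcal{O}(\varepsilon^2)$, and dividing by $\beta$ componentwise introduces a further relative error, i.e.\ $\hat{c}=(b-Xy+w_1+w_2)(1+\delta')/\beta$ componentwise with $|\delta'|\le\varepsilon$. Multiplying through by $\beta$ and collecting the first-order terms into a single $g$, I would bound $\|g\|_2$ by grouping: the matrix-vector part gives $k\|X\|_F\|y\|_2$, the subtraction/division parts give contributions bounded by $\|b\|_2$ and $\|\beta\hat c\|_2=\|b-Xy\|_2+\mathcal{O}(\varepsilon)$, and all of these are then majorized by $(k+1)\|[X~~\hat c]\|_F\,\big\|(y;\beta)\big\|_2$ using $\|b\|_2,\|b-Xy\|_2\le \|[X~~\hat c]\|_F\|(y;\beta)\|_2+\mathcal O(\varepsilon)$ and $\|X\|_F\|y\|_2\le\|[X~~\hat c]\|_F\|(y;\beta)\|_2$, with the count $k+1$ absorbing the extra additive and division roundings on top of the $k$ from the product.

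The only mildly delicate point — not really an obstacle — is the bookkeeping in part (c): one must be careful that the error $w_2$ is measured against $\|b\|_2+\|Xy\|_2$ rather than $\|b-Xy\|_2$ before the cancellation is accounted for, and then re-express everything in terms of the \emph{computed} quantities $\hat c$ and the data block $[X~~\hat c]$ so that the bound is a posteriori checkable; this is exactly why the factor $(k+1)$ and the combined norm $\|[X~~\hat c]\|_F\|(y;\beta)\|_2$ appear in the statement. Throughout, all $\varepsilon^2$ and higher terms are swept into $\mathcal{O}(\varepsilon^2)$, so no sharp tracking of constants is needed beyond the leading order.
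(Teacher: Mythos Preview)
The paper does not actually prove this lemma: immediately after the statement it writes ``For the Proof of the Lemma-\ref{lem4.1}, see the Lemma-4.1 in \cite{stabtoar}.'' So there is no in-paper argument to compare against; your direct floating-point accumulation argument is a correct and self-contained substitute for that citation.

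One remark on part~(c): your bookkeeping works, but it is more roundabout than necessary. The slick way (and presumably what the cited reference does) is to observe that the defining relation $\beta\hat c = b - Xy + g$ rearranges to
\[
b \;=\; [X~~\hat c]\begin{bmatrix} y\\ \beta\end{bmatrix} - g,
\]
so that computing $\hat c$ is, in backward-error terms, exactly a matrix--vector product with the augmented matrix $[X~~\hat c]$ having $k+1$ columns. Part~(b) then applies verbatim with $k$ replaced by $k+1$, giving the stated bound in one line. This avoids the separate treatment of the subtraction error $w_2$ and the division error, and in particular sidesteps the ``mildly delicate'' re-expression step you flag: the computed quantity $\hat c$ enters the bound automatically because it is a column of the augmented matrix, not because one has to convert a priori quantities into a posteriori ones after the fact.
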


For the Proof of the Lemma-\ref{lem4.1}, see the Lemma-4.1 in \cite{stabtoar}. This lemma holds true for I-TOAR as well. Next, the following lemma derives an upper bound for $\|F_{mv}\|_F,$
where $F_{mv}$ is a matrix defined as in the equation (\ref{eq16}).

\begin{lem}\label{lem11}\relax
Let $\bm\hat{Q}_{k-1}$ and $\bm\hat{U}_{k-1}$ be orthonormal matrices computed by the I-TOAR procedure. Then,
$$\|F_{mv}\|_F \leq  4k^2n
\|[A~~B]\|_F\|\bm\hat{Q}_{k-1}\|_2   \|\bm\hat{U}_{k-1}\|_2
\varepsilon + \mathcal{O}(\varepsilon^2).$$
\end{lem}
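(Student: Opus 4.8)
The plan is to unwind the definition of $F_{mv}$ column by column and apply the standard floating–point bounds of Lemma~\ref{lem4.1}. Recall from~(\ref{eq16}) that the $j$-th column of $F_{mv}$ is
$A(\bm\hat{Q}_{j}\bm\hat{U}_{j,1}(:,j))+B(\bm\hat{Q}_{j}\bm\hat{U}_{j,2}(:,j))-\hat{r}_j$,
where $\hat{r}_j = fl\bigl(A\bm\hat{Q}_{j}\bm\hat{U}_{j,1}(:,j))+B\bm\hat{Q}_{j}\bm\hat{U}_{j,2}(:,j)\bigr)$. So the first step is to track the rounding error incurred in forming $\hat r_j$: there is error in computing the two matrix–vector products $w_1 := \bm\hat{Q}_{j}\bm\hat{U}_{j,1}(:,j)$ and $w_2 := \bm\hat{Q}_{j}\bm\hat{U}_{j,2}(:,j)$ (at most two applications of part~(b) of Lemma~\ref{lem4.1}, each nested, since $\bm\hat Q_j$ and $\bm\hat U_{j}$ are applied in succession), then error in applying $A$ and $B$ to those vectors (part~(b) again), and finally error in the single vector addition (part~(a)). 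Each invocation contributes a term bounded by a small multiple of $j$ (or $n$, whichever is the relevant dimension) times $\|A\|$ or $\|B\|$ times $\|\bm\hat Q_j\|_2 \|\bm\hat U_j\|_2 \varepsilon$, plus $\mathcal{O}(\varepsilon^2)$.

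Next I would collect these per-column bounds. Using $\|[A~B]\|_F \geq \max(\|A\|_F,\|B\|_F)$ and the fact that $\bm\hat U_{j,1}(:,j), \bm\hat U_{j,2}(:,j)$ are columns of $\bm\hat U_{j}$, whose norm is bounded by $\|\bm\hat U_{k-1}\|_2$ for $j\le k-1$ (and likewise $\|\bm\hat Q_j\|_2 \le \|\bm\hat Q_{k-1}\|_2$, since $\bm\hat Q_j$ is a submatrix of $\bm\hat Q_{k-1}$), every column of $F_{mv}$ admits a bound of the form $c\,k\,n\,\|[A~B]\|_F\,\|\bm\hat Q_{k-1}\|_2\,\|\bm\hat U_{k-1}\|_2\,\varepsilon + \mathcal{O}(\varepsilon^2)$ for a modest absolute constant $c$ arising from counting the nested applications of Lemma~\ref{lem4.1} (I expect $c \le 4$ after grouping the matrix–matrix–vector chains). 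Since $F_{mv}$ has $k-1$ columns, $\|F_{mv}\|_F \le \sqrt{k-1}$ times the columnwise bound; absorbing $\sqrt{k-1}$ into the polynomial factor and rounding the dimension/constant bookkeeping upward gives the stated $4k^2 n$ coefficient, with the $\mathcal{O}(\varepsilon^2)$ terms collected at the end.

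The main obstacle is the careful accounting of the nested matrix–vector products: forming $\bm\hat Q_j(\bm\hat U_{j,1}(:,j))$ is itself a product of two computed matrices, so one must apply part~(b) of Lemma~\ref{lem4.1} once for the inner product and once for the outer, and each layer multiplies in a factor of the relevant dimension and a norm. Keeping these factors straight — deciding when the dimension is $k$, when it is $n$, and when the subadditivity of Frobenius norm over columns is the right tool versus a direct operator-norm estimate — is the delicate part, and it is where the precise constant $4$ and the exponents on $k$ and $n$ in the final bound are pinned down. Everything else is a routine triangle-inequality assembly; one simply has to be disciplined about discarding $\mathcal{O}(\varepsilon^2)$ contributions and uniformly replacing $\|\bm\hat Q_j\|_2, \|\bm\hat U_j\|_2$ by their $k-1$ analogues.
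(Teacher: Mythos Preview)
Your proposal is correct and follows essentially the same route as the paper: a column-by-column application of Lemma~\ref{lem4.1} to track the rounding error in forming $\hat r_j$, followed by aggregation over the $k-1$ columns and a Frobenius-to-operator-norm conversion. The paper organizes the per-column error into three pieces $w_j^{(1)},w_j^{(2)},w_j$ (treating $[A~~B]$ as a single $n\times 2n$ matrix acting on the stacked vector, rather than handling $Aw_1$, $Bw_2$, and their sum separately as you suggest) and picks up the second factor of $k$ from $\|\bm\hat Q_{k-1}\|_F\le\sqrt{k}\,\|\bm\hat Q_{k-1}\|_2$ and $\|\bm\hat U_{k-1}\|_F\le\sqrt{k}\,\|\bm\hat U_{k-1}\|_2$, which matches the bookkeeping you outline.
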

\begin{proof}
From  the definition of $F_{mv}$ in the equation (\ref{eq16}), we have
$$F_{mv}(;,j)= [A~~B]\begin{bmatrix}\bm\hat{Q}_{j}\bm\hat{U}_{j,1}(:,j)\\ \hat{Q}_{j}\bm\hat{U}_{j,2}(:,j)
\end{bmatrix}-\bm \hat{r}_j,$$
where $\bm\hat{r}_j \equiv fl \Big([A~~B]\begin{bmatrix}\bm\hat{Q}_{j}\bm\hat{U}_{j,1}(:,j)\\
\hat{Q}_{j}\bm\hat{U}_{j,2}(:,j)\end{bmatrix}\Big).$ Here, we used the
fact that $\bm\hat{U}_{k-1,1},$ and $\bm\hat{U}_{k-1,2}$ are upper Hessenberg and diagonal matrices, respectively and $j \leq k-1$. Now, by the repeated application of the Lemma-\ref{lem4.1}, we have
$$\bm\hat{r}_j = [A~~B]fl \Big(\begin{bmatrix}\bm\hat{Q}_{j}\bm\hat{U}_{j,1}(:,j)\\ \hat{Q}_{j}\bm\hat{U}_{j,2}(:,j) \end{bmatrix}\Big)+w_j$$
$$~~~~~~~~~~~~~~~~~~=[A~~B]\begin{bmatrix}\bm\hat{Q}_{j}\bm\hat{U}_{j,1}(:,j)\\ \hat{Q}_{j}\bm\hat{U}_{j,2}(:,j)
\end{bmatrix}+w_j^{(1)}+w_j^{(2)}+w_j,$$
where $w_j^{(1)},$ $w_j^{(2)},$ and  $w_j$ are the floating point
error vectors satisfying the following relations, respectively.
$$\|w_j^{(1)} \|_2 \leq k \| \bm\hat{Q}_{j}\|_F\|\bm\hat{U}_{j,1}(:,j)\|_2 \varepsilon +  \mathcal{O}(\varepsilon^2), $$
$$\|w_j^{(2)} \|_2 \leq k \| \bm\hat{Q}_{j}\|_F\|\bm\hat{U}_{j,2}(:,j)\|_2 \varepsilon +  \mathcal{O}(\varepsilon^2), $$
and
$$\|w_j\|_2 \leq 2nk \|[A~~B]\|_F  \Big\|\begin{bmatrix}\bm\hat{Q}_{j}\bm\hat{U}_{j,1}(:,j)\\ \hat{Q}_{j}\bm\hat{U}_{j,2}(:,j) \end{bmatrix} \Big\|_2  \varepsilon +  \mathcal{O}(\varepsilon^2) \leq 4nk \|[A~~B]\|_F \|\bm\hat{Q}_{j}\|_F \Big\|\begin{bmatrix}\bm\hat{U}_{j,1}(:,j)\\ \bm\hat{U}_{j,2}(:,j) \end{bmatrix} \Big\|_2 \varepsilon +  \mathcal{O}(\varepsilon^2).$$
Now, combine all the three previous inequalities, and use the facts $\bm\hat{U}_{j} = \begin{bmatrix}
\bm\hat{U}_{j,1}\\\bm\hat{U}_{j,2}
\end{bmatrix},$ and $\|\bm\hat{U}_{j,i}(:,j)\|_2 \leq \|\bm\hat{U}_{j}(:,j)\|_2,~\mbox{for}~i=1,2.$ It gives
$$\|F_{mv}(:,j)\|_2 \leq (4nk \|[A~~B]\|_F +2k)\|\bm\hat{Q}_{j}\|_F \Big \|\begin{bmatrix}\bm\hat{U}_{j,1}(:,j)\\ \bm\hat{U}_{j,2}(:,j) \end{bmatrix} \Big\|_2 \varepsilon +  \mathcal{O}(\varepsilon^2).$$
Then, using $\|\bm\hat{Q}_{j}\|_F \leq
\|\bm\hat{Q}_{k-1}\|_F$ for $j \leq k-1,$ the above inequality gives the following:
$$\|F_{mv}(:,j)\|_2 \leq  2k(2n \|[A~~B]\|_F +1)\|\bm\hat{Q}_{k-1}\|_F   \|\bm\hat{U}_j(:,j )\|_2 \varepsilon +  \mathcal{O}(\varepsilon^2).$$
Observe that in matrix terms this equation can be written as follows:
$$\|F_{mv}\|_F = \Big(\displaystyle \sum_{j=1}^{k-1}
\|F_{mv}(:,j)\|_2^2 \Big)^{1/2} \leq 2k(2n
\|[A~~B]\|_F+1)\|\bm\hat{Q}_{k-1}\|_F   \|\bm\hat{U}_{k-1}\|_F
\varepsilon + \mathcal{O}(\varepsilon^2).$$ 
Further, using the inequalities $\|\bm\hat{Q}_{j}\|_F \leq \|\bm\hat{Q}_{k-1}\|_F \leq
\sqrt k \|\bm\hat{Q}_{k-1}\|_2,$ and $\|\bm\hat{U}_{k-1}\|_F \leq
\sqrt k \|\bm\hat{U}_{k-1}\|_2$ this gives
$$\|F_{mv}\|_F \leq 2k^2(2n
\|[A~~B]\|_F+1)\|\bm\hat{Q}_{k-1}\|_2   \|\bm\hat{U}_{k-1}\|_2
\varepsilon + \mathcal{O}(\varepsilon^2).$$ Since the I-TOAR Algorithm uses the MGS process to
generate orthonormal matrices $\bm\hat{Q}_{k-1}$ and $\bm\hat{U}_{k-1}$,
we have $\|\bm\hat{Q}_{k-1}\|_2=
\|\bm\hat{U}_{k-1}\|_2=1+\mathcal{O}(\varepsilon).$ Therefore,
neglecting the term $2k^2\|\bm\hat{Q}_{k-1}\|_2
\|\bm\hat{U}_{k-1}\|_2 \varepsilon$ in the above equation completes the proof of the lemma.
\end{proof}
\\\\\ni Next, to derive an upper bound for $\|F_1\|_F,$ the following lemma is required.
\begin{lem}\label{lem12}\relax
Let $f$ be an error vector resulting from the computation in the
steps (k)-(n) of the Algorithm-\ref{Improved TOAR method}. Then
$$\|\bm\hat{U}_{j,1}\|_F\|f\|_2 \leq 2(j+1)^3
\mathcal{K}_2(\bm\hat{U}_{j,1})^2 \varepsilon.$$
\end{lem}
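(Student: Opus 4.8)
The plan is to track the floating‑point errors generated in steps (k)--(n) of Algorithm~\ref{Improved TOAR method}, where the vector $s=\hat Q_j^\ast r$ is orthogonalized against the columns of $\hat U_{j,1}$ by a Modified Gram--Schmidt sweep and the vector $h$ (hence the adjusted $u$) is formed. First I would name the exact intermediate quantities produced by these steps in exact arithmetic and their computed counterparts, and introduce $f$ as the discrepancy $f=$ (computed $x_{j+1}$, adjusted $u$ and $h$ aggregate) minus (exact version), consistent with how $F_1$ will later absorb it. The natural tool is Lemma~\ref{lem4.1}: part (b) controls each matrix--vector product $\hat U_{j,1}(:,t)^\ast s$ and each axpy $s\leftarrow s-\gamma_1(t)s$, part (c) controls the final normalization by $h_{j+1,j}$, and part (a) handles the additive combinations. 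Each elementary operation in the MGS sweep contributes an error of size $O(\varepsilon)$ times a product of norms of the vectors involved; since there are at most $j+1$ columns and the vectors have length at most $j+1$, accumulating over the sweep produces a factor $(j+1)^2$, and one more factor $(j+1)$ comes from the length of the accumulated rounding in each inner product, giving the cubic power.

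The key step is converting the naive bound, which would be proportional to $\|s\|_2$ and to ratios like $\|s\|_2/\|\hat U_{j,1}(:,t)\|_2$ arising from the division $\gamma_1(t)=\hat U_{j,1}(:,t)^\ast s/\|\hat U_{j,1}(:,t)\|^2$, into one expressed purely through the condition number $\mathcal{K}_2(\hat U_{j,1})$. Here I would use two structural facts already available: $\hat U_{j,1}$ has (nearly) orthonormal columns after the MGS step so $\|\hat U_{j,1}\|_F\le\sqrt{j+1}\,\|\hat U_{j,1}\|_2$, and the smallest column norm (and more generally the smallest singular value) of $\hat U_{j,1}$ is bounded below by $1/\mathcal{K}_2(\hat U_{j,1})\cdot\|\hat U_{j,1}\|_2$, which is what turns each division by a small pivot into a factor $\mathcal{K}_2(\hat U_{j,1})$. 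Because $s$ itself arises as a component of a unit vector in the compact factorization, $\|s\|_2\le 1+O(\varepsilon)$, so the $\|s\|_2$ factors are harmless. Squaring the condition number is the price of combining the $1/\sigma_{\min}$ from the division with the $\|\hat U_{j,1}\|_2$ from the Frobenius‑norm expansion on the left‑hand side $\|\hat U_{j,1}\|_F\|f\|_2$.

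The main obstacle I anticipate is bookkeeping: isolating exactly which operations in steps (k)--(n) are responsible for $f$ (as opposed to errors already charged to $F_{mv}$ or to the $Q$‑orthogonalization in steps (e)--(h)), and making sure the $u(\alpha_j)$‑update line and the zeroing of $u(1{:}\alpha_j-1)$ are accounted for without double counting. Once that partition is fixed, the estimate is a routine cascade of Lemma~\ref{lem4.1} applications; the only subtlety is being careful that the MGS sweep's error does not pick up an extra $\mathcal{K}_2$ power beyond the square. I would therefore structure the proof as: (i) write the exact recursion for the MGS sweep and the $h$/$u$ updates; (ii) apply Lemma~\ref{lem4.1}(a)--(c) term by term to get $\|f\|_2\lesssim (j+1)^2\|\hat U_{j,1}\|_F\,\max_t(1/\|\hat U_{j,1}(:,t)\|_2)\,\|s\|_2\,\varepsilon$; (iii) substitute $\|\hat U_{j,1}\|_F\le\sqrt{j+1}\|\hat U_{j,1}\|_2$, $\min_t\|\hat U_{j,1}(:,t)\|_2\ge \sigma_{\min}(\hat U_{j,1})=\|\hat U_{j,1}\|_2/\mathcal{K}_2(\hat U_{j,1})$, and $\|s\|_2=1+O(\varepsilon)$; (iv) multiply through by $\|\hat U_{j,1}\|_F$ and collect powers to reach $2(j+1)^3\mathcal{K}_2(\hat U_{j,1})^2\varepsilon$, absorbing constants and $O(\varepsilon^2)$ terms.
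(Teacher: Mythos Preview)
Your proposal misreads what $f$ is. In the paper, $f$ is \emph{not} the aggregate discrepancy in the outputs $(x_{j+1},u,h)$ of steps (k)--(n); it is specifically the floating-point error in the coefficient vector $\gamma_1$ computed in step~(l), i.e.\ the error in forming $\gamma_1=(\hat U_{j,1}^\ast\hat U_{j,1})^{-1}\hat U_{j,1}^\ast s$ entrywise. The overall orthogonalization error you are trying to bound is called $g_{1,j}$ and is the subject of Lemma~\ref{lem13}, which \emph{uses} the present lemma as an input (the term $\varphi_2\|\hat U_{j,1}\|_F\|f\|_2\varepsilon$ there is exactly where this bound is consumed). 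So the bookkeeping you describe as ``the main obstacle'' is not needed here at all; the partition between $f$ and $g_{1,j}$ is already fixed by how the two lemmas interlock.

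A second, related slip: you say $\hat U_{j,1}$ has ``(nearly) orthonormal columns.'' It does not. The defining feature of I-TOAR is that the columns of $\hat U_{j,1}$ are \emph{orthogonal} but not normalized, so $\hat U_{j,1}^\ast\hat U_{j,1}$ is diagonal with entries $\|\hat U_{j,1}(:,t)\|_2^2$. This is precisely what lets the paper write the elementwise computation of $\gamma_1(t)=\hat U_{j,1}(:,t)^\ast s/\|\hat U_{j,1}(:,t)\|_2^2$ in the compact vector form above and then bound $\|f\|_2\le 2(j+1)\|\hat U_{j,1}\|_F\|\hat s\|_2\|(\hat U_{j,1}^\ast\hat U_{j,1})^{-1}\|_F\varepsilon$ in one stroke. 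The $(j+1)^3$ and the $\mathcal{K}_2^2$ then fall out mechanically from $\|\hat U_{j,1}\|_F^2\le(j+1)\sigma_{\max}^2$ and $\|(\hat U_{j,1}^\ast\hat U_{j,1})^{-1}\|_F\le(j+1)/\sigma_{\min}^2$; there is no MGS cascade to track and no need for Lemma~\ref{lem4.1}(c). Your route through a full MGS error accumulation is aimed at the wrong target and, even if carried out, would be reproving Lemma~\ref{lem13} rather than the present lemma.
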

\begin{proof}
The elements of computed vector $\alpha_1$ in the step-(l) of the Algorithm-\ref{Improved TOAR method} satisfy the relation:
$$\gamma_1(i)= \bm\hat{U}_{j,1}(:,i)^\ast
s/\bm\hat{U}_{j,1}^\ast \bm\hat{U}_{j,1}(i,i)+f_i,$$  where  $f_i$
is the error resulting from the computation of an inner product in
the numerator and the
division. 
By using the fact that $\bm\hat{U}_{j,1}^\ast \bm\hat{U}_{j,1}$ is a diagonal matrix, the above element-wise computation can be written in the vector form as follows:
$$\gamma_1 = (\bm\hat{U}_{j,1}^\ast \bm\hat{U}_{j,1})^{-1}  \bm\hat{U}_{j,1}^\ast s+f,$$ 
where an error vector $f$ satisfies the following relation:
$$\|f\|_2 \leq 2(j+1) \|\bm\hat{U}_{j,1}^\ast\|_F\|\bm\hat{s}\|\|_2(\bm\hat{U}_{j,1}^\ast \bm\hat{U}_{j,1})^{-1}\|_F\varepsilon.$$
Therefore, using $ \|\bm\hat{U}_{j,1}\|_F =
\|\bm\hat{U}_{j,1}^\ast\|_F,$ this gives the inequality
$$ \|\bm\hat{U}_{j,1}\|_F \|f\|_2 \leq 2(j+1)\|\bm\hat{U}_{j,1}\|_F^2 \|\bm\hat{s}\|_2\|(\bm\hat{U}_{j,1}^\ast \bm\hat{U}_{j,1})^{-1}\|_F\varepsilon. $$
Let $\sigma_{max}$ and $\sigma_{min}$ be the largest and the
smallest singular values of $\bm\hat{U}_{j,1}^\ast,$ respectively.
Then, we have $\|\bm\hat{U}_{j,1}\|_F^2 \leq (j+1)\sigma_{max}^2$ and $
\|(\bm\hat{U}_{j,1}^\ast \bm\hat{U}_{j,1})^{-1}\|_F \leq
\frac{j+1}{\sigma_{min}^2}.$ Substitute these inequalities in
the previous equation and use $\mathcal{K}_2(\bm\hat{U}_{j,1})^2=\frac{\sigma_{max}^2}{\sigma_{min}^2}$ to complete the proof.
\end{proof}

\ni In the next lemma, we use the Lemma-\ref{lem12} to bound the
norm of error vector in the second orthogonalization process in the
steps (k)~-~(n)  of the Algorithm-\ref{Improved TOAR method}.
Moreover, we are assuming that $\mathcal{K}_2(\bm\hat{U}_{j,1})$ is
moderately small.
\begin{lem}\label{lem13}\relax
Let $g_{1,j}$ be a overall floating point error vector resulting
from the second level orthogonalization process  in the steps
(k)-(n) of the Algorithm-\ref{Improved TOAR method}. Then
\begin{equation}\label{eq17b}\relax
\|g_{1,j} \|_2 \leq (j+1) \|\bm\hat{U}_{j+1,1}\|_F\|\underline{
\bm\hat{H}_k}(1:j+1,j)\|_2 \varepsilon+\mathcal{O}(\varepsilon)^2
\end{equation}
\end{lem}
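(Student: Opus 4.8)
The plan is to track the errors in the steps (k)--(n) of Algorithm~\ref{Improved TOAR method} as a composition of two sources: the error vector $f$ from computing the projection coefficients $\gamma_1$ (already bounded in Lemma~\ref{lem12}), and the rounding errors introduced by the subsequent vector updates $s \leftarrow s - \gamma_1(t)\,\bm\hat U_{j,1}(:,t)$ together with the final subtraction forming the new column of $\bm\hat U_{j+1,1}$. First I would write the computed update in the form $\bm\hat U_{j+1,1}(:,j+1) h_{j+1,j} = s - \bm\hat U_{j,1}\gamma_1 + (\text{rounding})$, i.e. I would express the overall error vector $g_{1,j}$ as $\bm\hat U_{j,1}f$ plus a matrix--vector rounding term, and then apply Lemma~\ref{lem4.1}(b)--(c) to the matrix--vector products and subtractions occurring in the loop. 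Each such application contributes a term of the form $(j+1)\|\bm\hat U_{j,1}\|_F\|\gamma_1\|_2\varepsilon$ (and analogous terms), so collecting them gives a bound of order $(j+1)\|\bm\hat U_{j,1}\|_F\big(\|\gamma_1\|_2 + \|s\|_2\big)\varepsilon$ up to $\mathcal O(\varepsilon^2)$.

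Next I would convert this into the stated form. The key observation is that, by the relations derived in Lemma~\ref{lem9} (equations~(\ref{eq11a}), (\ref{eq13a})), the computed quantity $h_{j+1,j}\,\bm\hat U_{j+1,1}(:,j+1)$ equals $s - \bm\hat U_{j,1}h_j$ up to rounding, and $\gamma_1 \approx h_j = \bm\hat{\underline H_k}(1:j,j)$; hence $\|\gamma_1\|_2 \le \|\bm\hat{\underline H_k}(1:j+1,j)\|_2 + \mathcal O(\varepsilon)$. Likewise $\|s\|_2 \le \|Q_j^\ast r\|_2$ and, since $s$ is a subvector of $Q_{j+1}^\ast r$, which in turn (again by~(\ref{eq11a})) equals $\bm\hat U_{j+1,1}\bm\hat{\underline H_k}(1:j+1,j)$ up to rounding, we get $\|s\|_2 \lesssim \|\bm\hat U_{j+1,1}\|_2\|\bm\hat{\underline H_k}(1:j+1,j)\|_2$. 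Absorbing $\|\bm\hat U_{j,1}\|_F \le \|\bm\hat U_{j+1,1}\|_F$ and using that the I-TOAR algorithm keeps $\|\bm\hat U_{j+1,1}\|_2 = 1 + \mathcal O(\varepsilon)$ (MGS orthogonalization of that block), the two contributions combine into $(j+1)\|\bm\hat U_{j+1,1}\|_F\|\bm\hat{\underline H_k}(1:j+1,j)\|_2\varepsilon + \mathcal O(\varepsilon^2)$, which is exactly~(\ref{eq17b}).

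The main obstacle I anticipate is bookkeeping the constant in front of $(j+1)$: the loop in steps (l)--(n) accumulates errors across up to $j+1$ iterations, and naively each iteration contributes an $\mathcal O((j+1)\varepsilon)$ term, which would give an $\mathcal O((j+1)^2\varepsilon)$ bound rather than the claimed $\mathcal O((j+1)\varepsilon)$. Getting the sharper linear factor requires using the standard trick that the accumulated Modified Gram--Schmidt rounding error for orthogonalizing a single vector against $j$ columns is $\mathcal O(j)\varepsilon$ times the relevant norms (not $\mathcal O(j^2)$), together with the fact proved in Lemma~\ref{lem7} that $\bm\hat U_{j,1}$ has (nearly) orthogonal columns so that $\|\gamma_1\|_2 \le \|s\|_2(1+\mathcal O(\varepsilon))$ and $\|s - \bm\hat U_{j,1}\gamma_1\|_2 \le \|s\|_2$. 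A secondary subtlety is that Lemma~\ref{lem12} bounds $\|\bm\hat U_{j,1}\|_F\|f\|_2$ in terms of $\mathcal K_2(\bm\hat U_{j,1})^2$; under the standing assumption (stated just before the lemma) that $\mathcal K_2(\bm\hat U_{j,1})$ is moderately small this factor is $\mathcal O(1)$ and may be dropped, but this dependence must be acknowledged when invoking Lemma~\ref{lem12}.
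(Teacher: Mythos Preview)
Your proposal is correct and follows essentially the same approach as the paper: decompose the overall error $g_{1,j}$ into the contribution $\bm\hat U_{j,1}f$ from computing $\gamma_1$ (handled by Lemma~\ref{lem12}, which makes that term $\mathcal O(\varepsilon^2)$ under the moderate-$\mathcal K_2(\bm\hat U_{j,1})$ assumption) plus the rounding error from the vector update, handled by Lemma~\ref{lem4.1}(c).

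The only difference is one of economy. The paper applies Lemma~\ref{lem4.1}(c) a single time to the computation $\hat h_{j+1,j}\,\bm\hat U_{j+1,1}(1{:}j,j{+}1)=\hat s-\bm\hat U_{j,1}\hat h_j+g_{1,j}$, identifying $X=\bm\hat U_{j,1}$, $y=\hat h_j$, $\beta=\hat h_{j+1,j}$, $\hat c=\bm\hat U_{j+1,1}(1{:}j,j{+}1)$; then $[X\ \hat c]$ is a submatrix of $\bm\hat U_{j+1,1}$ and $\begin{bmatrix}y\\\beta\end{bmatrix}=\underline{\bm\hat H_k}(1{:}j{+}1,j)$, so the bound $(j+1)\|\bm\hat U_{j+1,1}\|_F\|\underline{\bm\hat H_k}(1{:}j{+}1,j)\|_2\varepsilon$ falls out immediately, with no separate estimation of $\|\gamma_1\|_2$ or $\|s\|_2$ needed. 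This direct identification is what resolves your worry about an extra factor of $(j+1)$: Lemma~\ref{lem4.1}(c) already packages the full MGS sweep into a single $\mathcal O((j{+}1)\varepsilon)$ bound, so no per-iteration accumulation argument is required.
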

\begin{proof}
On applying Lemma-\ref{lem4.1}(c) to the second orthogonalization
process in  the steps (k)~-~(n),  the computed $(j+1)$th column of
$U_{k+1,1}$ satisfies,
\begin{equation}\label{eq17a}\relax
\bm\hat{h}_{j+1,j} \bm \hat{U}_{j+1,1}(1:j,j+1)=\bm\hat{s}-
\bm\hat{U}_{j,1}(\bm\hat{U}_{j,1}^\ast
\bm\hat{U}_{j,1})^{-1}\bm\hat{U}_{j,1}^\ast\bm\hat{s}+g_{1,j}
=\bm\hat{s}- \bm\hat{U}_{j,1}\bm\hat{h}_j+g_{1,j}
\end{equation}
where $g_{1,j} $ is a floating point error vector
satisfying the inequality,
$$\|g_{1,j} \|_2 \leq (j+1) \|\bm\hat{U}_{j+1,1}\|_F\|\underline{ \bm\hat{H}_k}(1:j+1,j)\|_2 \varepsilon+ \varphi_2 \|\bm\hat{U}_{j,1}\|_F \|f\|_2 \varepsilon +\mathcal{O}(\varepsilon^2),$$
where $f$ is the error vector same as in the previous lemma.
In the above equation, we used the fact $h=
\gamma_1=(\bm\hat{U}_{j,1}^\ast
\bm\hat{U}_{j,1})^{-1}\bm\hat{U}_{j,1}^\ast\bm\hat{s}.$ 
Therefore, the proof is complete by using the fact from the
Lemma-\ref{lem12} that $\|\bm\hat{U}_{j,1}\|_F\|f\|_2 \varepsilon$ in the above
inequality is of $\mathcal{O}(\varepsilon^2).$
\end{proof}
\\\\\ni In the
following, we use the Lemma-\ref{lem13} to derive an upper bound for $\|F_1\|_F^2.$
\begin{lem}\label{lem14}\relax
Let $\bm\hat{Q}_{k},$ $\bm\hat{U}_{k}$ and $\underline{\hat{H}_k}$
be computed in the k-step of the I-TOAR procedure. Then
\begin{equation}\label{eq18b}\relax
\|F_1\|_F \leq
\varphi\|\bm\hat{Q}_{k}\|_2\|\bm\hat{U}_{k,1}\|_2\|\bm\hat{\underline{H_k}}\|_F\varepsilon+\mathcal{O}(\varepsilon^2)
\end{equation}
where $\varphi= (k+1)(2k+1).$
\end{lem}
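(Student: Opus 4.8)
The plan is to bound $\|F_1\|_F$ column by column, using the definition $F_1 = \bm\hat{R}_{k-1} - \bm\hat{Q}_k\bm\hat{U}_{k,1}\bm\hat{\underline{H_k}}$ from the equation~(\ref{eq17}). Fix a column index $j$ with $1 \leq j \leq k-1$. By the structure of $\bm\hat{U}_{k,1}$ (upper Hessenberg, so only its first $j+1$ rows in column $j+1$ are relevant to reproducing $\hat{r}_j$), the $j$th column of $F_1$ is exactly the floating-point residual of the orthogonalization that produces the $(j+1)$st column of $\bm\hat{U}_{j+1,1}$ and the scalar $\bm\hat{h}_{j+1,j}$. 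More precisely, from the equation~(\ref{eq17a}) in Lemma~\ref{lem13} we have
$$\bm\hat{h}_{j+1,j}\,\bm\hat{U}_{j+1,1}(1{:}j{+}1,j{+}1) = \bm\hat{s} - \bm\hat{U}_{j,1}\bm\hat{h}_j + g_{1,j},$$
and, unwinding the definitions of $\bm\hat{s} = \bm\hat{Q}_j^\ast\hat{r}_j$ and $\bm\hat{R}_{k-1}$, the $j$th column of $F_1$ equals $-\bm\hat{Q}_j g_{1,j}$ up to an $\mathcal{O}(\varepsilon^2)$ term coming from the error in forming $\bm\hat{s}$ itself. Hence $\|F_1(:,j)\|_2 \leq \|\bm\hat{Q}_j\|_2\|g_{1,j}\|_2 + \mathcal{O}(\varepsilon^2)$.

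Next I would substitute the bound from Lemma~\ref{lem13}, namely
$$\|g_{1,j}\|_2 \leq (j+1)\|\bm\hat{U}_{j+1,1}\|_F\|\bm\hat{\underline{H_k}}(1{:}j{+}1,j)\|_2\varepsilon + \mathcal{O}(\varepsilon^2),$$
together with $\|\bm\hat{Q}_j\|_2 \leq \|\bm\hat{Q}_k\|_2$, $\|\bm\hat{U}_{j+1,1}\|_F \leq \|\bm\hat{U}_{k,1}\|_F \leq \sqrt{k}\,\|\bm\hat{U}_{k,1}\|_2$, and $(j+1) \leq k$. This gives a per-column bound of the shape $\|F_1(:,j)\|_2 \leq k^{3/2}\|\bm\hat{Q}_k\|_2\|\bm\hat{U}_{k,1}\|_2\|\bm\hat{\underline{H_k}}(:,j)\|_2\varepsilon + \mathcal{O}(\varepsilon^2)$. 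Summing the squares over $j = 1,\dots,k-1$ and using $\big(\sum_j \|\bm\hat{\underline{H_k}}(:,j)\|_2^2\big)^{1/2} = \|\bm\hat{\underline{H_k}}\|_F$ then yields $\|F_1\|_F \leq \varphi\,\|\bm\hat{Q}_k\|_2\|\bm\hat{U}_{k,1}\|_2\|\bm\hat{\underline{H_k}}\|_F\varepsilon + \mathcal{O}(\varepsilon^2)$ for a polynomial factor $\varphi$; tracking the constants carefully (the $(j+1)$ from the orthogonalization, the extra $\sqrt{k}$ factors, and a contribution of size roughly $k$ from the summation) should produce $\varphi = (k+1)(2k+1)$ as claimed.

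The main obstacle I anticipate is the careful bookkeeping in the first step: establishing cleanly that $F_1(:,j) = -\bm\hat{Q}_j g_{1,j} + \mathcal{O}(\varepsilon^2)$. One must account for the error in computing $\bm\hat{s} = \bm\hat{Q}_j^\ast \hat r_j$ (the inner products in steps (f)--(g)), for the fact that $\bm\hat{Q}_j$ is only orthonormal to within $\mathcal{O}(\varepsilon)$, and for the contribution of the last-row update of $\bm\hat{U}_{j+1,2}$ (step (o)), and then argue that all of these are absorbed into the $\mathcal{O}(\varepsilon^2)$ remainder once multiplied against the already-$\mathcal{O}(\varepsilon)$ quantities. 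A secondary subtlety is confirming that only the first $j+1$ entries of column $j+1$ of $\bm\hat{U}_{k,1}$ enter $\bm\hat{Q}_k\bm\hat{U}_{k,1}\bm\hat{\underline{H_k}}(:,j)$, which is exactly the upper-Hessenberg structure guaranteed by Lemma~\ref{lem2} and the form~(\ref{eq6}); once that structural fact is invoked, the reduction to Lemma~\ref{lem13} is immediate and the rest is the routine norm arithmetic sketched above.
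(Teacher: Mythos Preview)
Your column-by-column plan and the final summation are the same as the paper's, but the heart of your argument---the claim that $F_1(:,j)=-\bm\hat{Q}_j g_{1,j}+\mathcal{O}(\varepsilon^2)$---is not right. When you unwind the $j$th column you get (using the upper-Hessenberg structure and the form~(\ref{eq6}))
\[
F_1(:,j)=\hat r_j-\bm\hat{Q}_j\bm\hat{U}_{j,1}\hat h_j-\hat h_{j+1,j}\bigl(\bm\hat{Q}_j\bm\hat{U}_{j+1,1}(1{:}j,j{+}1)+\hat q_{j+1}\hat\beta/\hat h_{j+1,j}\bigr).
\]
Plugging in~(\ref{eq17a}) for the middle piece collapses this to
\[
F_1(:,j)=\bigl(\hat r_j-\bm\hat{Q}_j\hat s-\hat\beta\,\hat q_{j+1}\bigr)-\bm\hat{Q}_j g_{1,j}.
\]
The bracketed term is \emph{not} $\mathcal{O}(\varepsilon^2)$: it is precisely the residual $\tilde f_j$ of the \emph{first}-level MGS (steps~(e)--(i)), and Lemma~\ref{lem4.1}(c) gives only
\[
\|\tilde f_j\|_2\le (j+2)\,\|\bm\hat{Q}_{j+1}\|_F\Bigl\|\begin{bmatrix}\hat s\\ \hat\beta\end{bmatrix}\Bigr\|_2\varepsilon+\mathcal{O}(\varepsilon^2),
\]
which is first order in $\varepsilon$. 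Your proposed absorption mechanism (``multiplied against already-$\mathcal{O}(\varepsilon)$ quantities'') does not apply, because $\tilde f_j$ enters $F_1(:,j)$ additively, not as a factor of $g_{1,j}$ or of any other small term.

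This missing piece is exactly what produces the stated constant. The paper combines $\|\tilde f_j\|_2$ and $\|\bm\hat{Q}_j\|_F\|g_{1,j}\|_2$, yielding a prefactor $(j+2)+(j+1)=2j+3\le 2k+1$ in the per-column bound; the further $(k+1)$ appears when the Frobenius norms of $\bm\hat{Q}_k$ and $\bm\hat{U}_{k,1}$ are converted to $2$-norms. With only the $g_{1,j}$ contribution your bookkeeping would give roughly $k^{3/2}$, not $(k+1)(2k+1)$, so the constants cannot match by ``tracking carefully''. The fix is simply to keep $\tilde f_j$ as a separate $\mathcal{O}(\varepsilon)$ term, bound it via Lemma~\ref{lem4.1}(c), and add the two contributions before summing over~$j$.
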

\begin{proof}
Note that, in the $j$th iteration of I-TOAR, the computed quantity
at step-(d) is
$$\bm\hat{r}_j = fl  \Big([A~~B]\begin{bmatrix}\bm\hat{Q}_{j}\bm\hat{U}_{j,1}(:,j)\\ \hat{Q}_{j}\bm\hat{U}_{j,2}(:,j) \end{bmatrix}\Big)$$
Now, apply the Lemma-\ref{lem4.1}(c) to the orthogonalization and
normalization processes in the steps (e)~-~(h) and (x) of the
Algorithm-\ref{Improved TOAR method}. Then, the computed column
$\bm\hat{q}_{j+1}$ of $Q_k$ and $\bm\hat{s},$ $\bm\hat{\beta}$ computed in the steps (m) and (i) satisfy the following equation
\begin{equation}\label{eq18a}\relax
\bm\hat{\beta} \bm\hat{q}_{j+1}=\bm\hat{r}_j
-\bm\hat{Q}_{j}\bm\hat{s}- \tilde{f}_j,
\end{equation}
where 
  $\tilde{f}_j$ is the error vector, which satisfies the
following:
\begin{equation}\label{eq18lc}\relax
\|\tilde{f}_j\|_2 \leq (j+2) \| \bm\hat{Q}_{j+1}\|_F
\Big\|\begin{bmatrix} \bm\hat{s}\\\bm\hat{\beta}\end{bmatrix}
\Big\|\varepsilon +  \mathcal{O}(\varepsilon^2).
\end{equation}
Now, from the equation (\ref{eq17}), consider the
$j$th column of $F_1,$
\begin{equation}\label{eq19}\relax
f_{j,1} = \bm
\hat{r}_j-\bm\hat{Q}_k\bm\hat{U}_{k,1}(:,1:j)\bm\hat{h}_j-\bm\hat{h}_{j+1,j}\bm\hat{Q}_k
\bm\hat{U}_{k,1}(:,j+1)\\
=\bm\hat{r}_j
-\bm\hat{Q}_j\bm\hat{U}_{j,1}\bm\hat{h}_j-\bm\hat{h}_{j+1,j}\bm\hat{Q}_{j+1}\bm\hat{U}_{j+1,1}(:,j+1),
\end{equation}
where, for the second equality, we exploited the upper Hessenberg
structure of $\bm\hat{H}_k.$ Moreover, from the equation
(\ref{eq6}), we have
$$\bm\hat{h}_{j+1,j}\bm\hat{Q}_{j+1}\bm\hat{U}_{j+1,1}(:,j+1)= \bm\hat{h}_{j+1,j} \bm\hat{Q}_{j}\bm \hat{U}_{j+1,1}(1:j,j+1)+\bm\hat{q}_{j+1}\bm\hat{\beta}$$
Further, on left multiplying the equation (\ref{eq17a}) with
$\bm\hat{Q}_j,$ we have
\begin{equation}\label{eq19a}\relax
\bm\hat{h}_{j+1,j} \bm\hat{Q}_{j}\bm
\hat{U}_{j+1,1}(1:j,j+1)=\bm\hat{Q}_{j}\bm\hat{s}-\bm\hat{Q}_{j}\bm
\hat{U}_{j,1}\bm\hat{h}_{j}+\bm\hat{Q}_{j}g_{1,j}
\end{equation}
Hence, the substitution of the equations (\ref{eq18a}) and
(\ref{eq19a}) in the equation (\ref{eq19}) will give
$$f_{j,1}=\bm\hat{r}_j-\bm\hat{Q}_j\bm\hat{s}+\bm\hat{q}_{j+1}\bm\hat{\beta}+\bm\hat{Q}_{j}g_{1,j}\\
= \tilde{f}_j+\bm\hat{Q}_{j}g_{1,j}.$$ Thus,
\begin{eqnarray}\label{eq20}\relax
\nonumber \|f_{j,1}\|_2 \leq
\|\tilde{f}_j\|_2+\|\bm\hat{Q}_{j}\|_F\|g_{1,j}\|_2,
~~~~~~~~~~~~~~~~~~~~~~~~~~~~~~~~~~~~~~~~~~~~~~~~~~~~~~~~~~~~~~~~~~~~~~~~~\\
\nonumber
 \leq \Big((j+2)\Big\|\begin{bmatrix}\bm\hat{s}\\\bm\hat{\beta}
\end{bmatrix}\Big\|_2+\|g_{1,j}\|_2\big)\|\bm
\hat{Q}_{j+1}\|_F+\mathcal{O}(\varepsilon^2),~~~~~~~~~~~~~~~~~~~~~~~~~~~~~~~~~~~\\
\nonumber
\leq \Big((j+2)
\|\bm\hat{U}_{j+1,1}\|_2\|\bm\hat{\underline{ H_k}}(1:j+1,j)\|_2
\varepsilon+\|g_{1,j}\|_2\Big)\|\bm
\hat{Q}_{j+1}\|_F+\mathcal{O}(\varepsilon^2),~\\
\nonumber
 \leq
(2j+3)\|\bm\hat{Q}_{j+1}\|_F\|\bm\hat{U}_{j+1,1}\|_F\|\bm\hat{\underline{H_k}}(1:j+1,j)\|_2\varepsilon+\mathcal{O}(\varepsilon^2),~~~~~~~~~~~~~~~~~~~\\
\leq
(2k+1)\|\bm\hat{Q}_{k}\|_F\|\bm\hat{U}_{k,1}\|_F\|\bm\hat{\underline{H_k}}(1:j+1,j)\|_2\varepsilon+\mathcal{O}(\varepsilon^2).~~~~~~~~~~~~~~~~~~~~~~~~~~~
\end{eqnarray}
In the second, third and fourth inequalities, we used the
equations (\ref{eq18lc}), (\ref{eq17a}) and (\ref{eq17b}),
respectively, whereas the following were used to obtain the last inequation:
$$\|\bm\hat{Q}_{j+1}\|_F \leq \|\bm\hat{Q}_{k}\|_F, ~~ \|\bm\hat{U}_{j+1,1}\|_F\|\leq \|\bm\hat{U}_{k,1}\|_F\|, ~\mbox{and} ~~j+1 \leq k.$$
Now, from the equation (\ref{eq20}), $\|F_1\|_F$ is given by
$$\|F_1\|_F^2 =\displaystyle \sum \limits_{j=1}^{k-1}\|f_{j,1}\|_2^2 \leq (2k+1)^2 \displaystyle \sum \limits_{j=1}^{k-1}\|\bm\hat{Q}_{k}\|_F^2\|\bm\hat{U}_{k,1}\|_F^2\|\bm\hat{\underline{H_k}}(1:j+1,j)\|_2^2\varepsilon^2+\mathcal{O}(\varepsilon^3) $$
$$ = (2k+1)^2\|\bm\hat{Q}_{k}\|_F^2\|\bm\hat{U}_{k,1}\|_F^2\|\bm\hat{\underline{H_k}}\|_F^2\varepsilon^2+\mathcal{O}(\varepsilon^3)$$
Therefore, the proof will be complete by converting the Frobenius
norm to 2-norm.
\end{proof}

\ni \textit{Remark-1:}
When the complete reorthogonalization has applied, the above result holds true with little change in the coefficients. In this case, following a  similar procedure as in the remark-1 in \cite{stabtoar}, it is easy to see
that the norm of the error vector $\tilde{f}_j$ satisfies the
following relation:
$$\|\tilde{f}_j\|_2 \leq (2c(j+1)+1)\|\bm\hat{Q}_{j+1}\|_2 \|[\bm\hat{s}^T~\bm\hat{\beta}]\|_2\varepsilon+\mathcal{O}(\varepsilon^2),$$
where $c$ is a small constant.\\ 
We now present the main theorem for an upper bound of relative backward error in the I-TOAR procedure. Moreover, we are using the following assumption in the proof:
\begin{equation}\label{eq21}\relax
\Big\| \begin{bmatrix}|\bm\hat{Q}_{k-1}\bm\hat{U}_{k-1,1})^\dag
\\\bm\hat{Q}_{k-1}\bm\hat{U}_{k-1,2})^\dag  \end{bmatrix} \Big\|_2
\leq \zeta_1
\|(\bm\hat{Q}_{k-1}\bm\hat{U}_{k-1})^\dag\|_2~~\mbox{and}~~
\sigma_{min}({\bm\hat{U}_{k,1}}) =\zeta_2
\sigma_{min}({\bm\hat{U}_{k}}),
\end{equation}
where $\sigma_{min}(X)$ denotes the smallest singular value of
a matrix $X.$ 
\begin{thm}\label{thm1}\relax
Let $\bm\hat{Q}_k,~\bm\hat{U}_{k,1}$ and $\bm\hat{U}_{k,2}$ be matrices of 
full column rank. Let
$$\mathcal{K} =
max\{\mathcal{K}_2(\bm\hat{Q}_k),\mathcal{K}_2(\bm\hat{U}_{k})\},$$
where for any matrix $X,$ $\mathcal{K}_2(X)$ denotes its 2-norm condition number.
 If $(k+1)(2k+1)\mathcal{K}^4 (\zeta_1)/\zeta_2
\varepsilon <1,$ then
$$\frac{\|[\triangle A~~\triangle B]\|_F}{\|[A~~B]\|_F} \leq (2\zeta_1 nk^2+(\varphi_2/2)\mathcal{K}^2)\mathcal{K}^2 \varepsilon,$$
where $\varphi_2 =(k+1)(2k+1)\zeta_1/\zeta_2.$
\end{thm}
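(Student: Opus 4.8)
The plan is to combine the backward-error decomposition $[\triangle A~~\triangle B] = [\triangle A_1 + \triangle A_2~~\triangle B_1 + \triangle B_2]$ already established before the theorem with the bounds on $\|F_{mv}\|_F$ (Lemma~\ref{lem11}) and $\|F_1\|_F$ (Lemma~\ref{lem14}). Recall that $\triangle A_1 = -F_{mv}\,\alpha\,(\bm\hat{Q}_{k-1}\bm\hat{U}_{k-1,1})^\dag$, $\triangle B_1 = -F_{mv}(1-\alpha)(\bm\hat{Q}_{k-1}\bm\hat{U}_{k-1,2})^\dag$, and analogously for $\triangle A_2,\triangle B_2$ with $F_1$ in place of $-F_{mv}$. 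So the first step is to bound $\|[\triangle A_1~~\triangle B_1]\|_F$ by $\|F_{mv}\|_F$ times $\big\|\,[\,\alpha(\bm\hat{Q}_{k-1}\bm\hat{U}_{k-1,1})^\dag~;~(1-\alpha)(\bm\hat{Q}_{k-1}\bm\hat{U}_{k-1,2})^\dag\,]\,\big\|_2$; the stacked-pseudoinverse block is controlled by the first hypothesis in~(\ref{eq21}), giving a factor $\zeta_1\|(\bm\hat{Q}_{k-1}\bm\hat{U}_{k-1})^\dag\|_2$. Since $\bm\hat{Q}_{k-1}$ has (near-)orthonormal columns, $\|(\bm\hat{Q}_{k-1}\bm\hat{U}_{k-1})^\dag\|_2 = 1/\sigma_{\min}(\bm\hat{Q}_{k-1}\bm\hat{U}_{k-1}) \approx 1/\sigma_{\min}(\bm\hat{U}_{k-1})$, and using $\|\bm\hat{U}_{k-1}\|_2 = 1+\mathcal O(\varepsilon)$ this is $\approx \mathcal K_2(\bm\hat{U}_{k-1}) \le \mathcal K$. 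Plugging in Lemma~\ref{lem11}'s bound $\|F_{mv}\|_F \le 4k^2 n\|[A~~B]\|_F\,\varepsilon + \mathcal O(\varepsilon^2)$ (where the $\|\bm\hat{Q}_{k-1}\|_2\|\bm\hat{U}_{k-1}\|_2$ factor is absorbed as $1+\mathcal O(\varepsilon)$) yields $\|[\triangle A_1~~\triangle B_1]\|_F \le 4\zeta_1 nk^2 \mathcal K\,\|[A~~B]\|_F\,\varepsilon + \mathcal O(\varepsilon^2)$; dividing by $\|[A~~B]\|_F$ accounts for the first term $2\zeta_1 nk^2\mathcal K^2$ of the claimed bound (the factor-of-two discrepancy and the extra power of $\mathcal K$ I would reconcile by tracking the $\|\bm\hat Q_{k-1}\|_2$, $\sigma_{\min}$ constants more carefully — this is bookkeeping, not substance).

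Next I would bound $\|[\triangle A_2~~\triangle B_2]\|_F \le \|F_1\|_F \cdot \zeta_1\|(\bm\hat Q_{k-1}\bm\hat U_{k-1})^\dag\|_2$ in the same way. Now Lemma~\ref{lem14} gives $\|F_1\|_F \le \varphi\,\|\bm\hat Q_k\|_2\|\bm\hat U_{k,1}\|_2\|\bm{\hat{\underline{H_k}}}\|_F\,\varepsilon + \mathcal O(\varepsilon^2)$ with $\varphi = (k+1)(2k+1)$. The subtlety here is that the pseudoinverse is of $\bm\hat U_{k-1}$ (the full stacked matrix) whereas Lemma~\ref{lem14} produces $\|\bm\hat U_{k,1}\|_2 \le 1+\mathcal O(\varepsilon)$ in the numerator; more importantly, to turn $\|\bm{\hat{\underline{H_k}}}\|_F$ into something comparable to $\|[A~~B]\|_F$ I must use the Arnoldi relation. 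From~(\ref{eq7}), $\bm{\hat{\underline{H_k}}}$ is (up to error terms) $(\bm\hat Q_k\bm\hat U_{k,1})^\dag$-type projection of $[A~~B]$ applied to $\bm\hat Q_{k-1}\bm\hat U_{k-1}$, so $\|\bm{\hat{\underline{H_k}}}\|_2 \lesssim \|[A~~B]\|_2 \cdot \|(\bm\hat U_{k,1})^\dag\|_2 \cdot \|\bm\hat U_{k-1}\|_2$; this introduces the factor $1/\sigma_{\min}(\bm\hat U_{k,1}) = 1/(\zeta_2\sigma_{\min}(\bm\hat U_k)) = \mathcal K_2(\bm\hat U_k)/\zeta_2 \le \mathcal K/\zeta_2$ via the second hypothesis in~(\ref{eq21}). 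Collecting: $\|[\triangle A_2~~\triangle B_2]\|_F/\|[A~~B]\|_F \lesssim (k+1)(2k+1)\,\zeta_1\,\mathcal K \cdot (\mathcal K/\zeta_2) \cdot \mathcal K \cdot \varepsilon = \varphi_2\,\mathcal K^3\,\varepsilon$ with $\varphi_2 = (k+1)(2k+1)\zeta_1/\zeta_2$; rewriting $\mathcal K^3 = \mathcal K \cdot \mathcal K^2$ and comparing with the target term $(\varphi_2/2)\mathcal K^4$ again leaves only constant/power bookkeeping, which the hypothesis $(k+1)(2k+1)\mathcal K^4\zeta_1/\zeta_2\,\varepsilon < 1$ is tailored to absorb (it guarantees the $\mathcal O(\varepsilon^2)$ remainders are genuinely lower order than the leading term).

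Finally I would add the two pieces by the triangle inequality, $\|[\triangle A~~\triangle B]\|_F \le \|[\triangle A_1~~\triangle B_1]\|_F + \|[\triangle A_2~~\triangle B_2]\|_F$, divide through by $\|[A~~B]\|_F$, and factor out $\mathcal K^2\varepsilon$ to get $(2\zeta_1 nk^2 + (\varphi_2/2)\mathcal K^2)\mathcal K^2\varepsilon$, discarding higher-order terms using the smallness hypothesis. I expect the \emph{main obstacle} to be the second step: cleanly relating $\|\bm{\hat{\underline{H_k}}}\|_F$ to $\|[A~~B]\|_F$ in finite precision. In exact arithmetic this follows immediately from $\bm{\underline{H_k}} = V_k^\ast L V_{k-1}$ and $\|L\|_2 \approx \|[A~~B]\|_2$, but here one must verify that the computed $\bm{\hat{\underline{H_k}}}$ still satisfies $\|\bm{\hat{\underline{H_k}}}\|_2 \le \|[A~~B]\|_2\,\mathcal K_2(\bm\hat U_{k,1})/\zeta_2 \cdot (1+\mathcal O(\varepsilon))$, which is exactly where the constants $\zeta_2$ and the condition-number powers in the theorem's hypothesis enter, and where one must be careful that the near-orthonormality of $\bm\hat Q_k$ and $\bm\hat U_k$ (guaranteed by the MGS steps) is invoked correctly. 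The rest is routine norm manipulation and absorbing $\mathcal O(\varepsilon^2)$ terms under the stated smallness assumption.
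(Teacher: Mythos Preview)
Your overall architecture matches the paper: split $[\triangle A~~\triangle B]$ into the $F_{mv}$-piece and the $F_1$-piece, bound each via the stacked pseudoinverse and assumption~(\ref{eq21}), and add. The factor of $1/2$ you call bookkeeping is simply the choice $\alpha=1/2$ in the definitions of $\triangle A_i,\triangle B_i$; and the extra powers of $\mathcal K$ that you are missing arise because the paper keeps both $\mathcal K_2(\bm\hat Q)$ and $\mathcal K_2(\bm\hat U)$ rather than absorbing $\sigma_{\min}(\bm\hat Q_{k-1})$ as $1+\mathcal O(\varepsilon)$: it uses $\sigma_{\min}(\bm\hat Q_{k-1}\bm\hat U_{k-1})\ge \sigma_{\min}(\bm\hat Q_{k-1})\sigma_{\min}(\bm\hat U_{k-1})$, which produces $\mathcal K^2$ in the $F_{mv}$ term and $\mathcal K^4$ in the $F_1$ term.

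There is, however, one genuine gap in your treatment of $\|\bm{\hat{\underline{H_k}}}\|_F$. You propose to bound it directly by $\|[A~~B]\|_F\cdot \mathcal K_2(\bm\hat U_{k,1})/\zeta_2$ via an exact-arithmetic Arnoldi identity. In finite precision the only clean identity available is~(\ref{eq17c}), which the paper inverts to obtain
\[
\bm{\hat{\underline{H_k}}}=(\bm\hat Q_k\bm\hat U_{k,1})^\dag\,\bigl([A~~B]+[\triangle A~~\triangle B]\bigr)
\begin{bmatrix}\bm\hat Q_{k-1}\bm\hat U_{k-1,1}\\ \bm\hat Q_{k-1}\bm\hat U_{k-1,2}\end{bmatrix}.
\]
Hence $\|\bm{\hat{\underline{H_k}}}\|_F$ is bounded by a constant times $\|[A~~B]\|_F+\|[\triangle A~~\triangle B]\|_F$, and substituting into the $F_1$ estimate yields an inequality in which $\|[\triangle A~~\triangle B]\|_F$ appears on \emph{both} sides. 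The paper closes this bootstrap by moving the right-hand occurrence to the left and dividing by $1-(\varphi_2/2)\mathcal K^4\varepsilon$. This is the actual role of the hypothesis $(k+1)(2k+1)\mathcal K^4\zeta_1/\zeta_2\,\varepsilon<1$: it guarantees that denominator is positive so the self-referential inequality can be solved, not merely that $\mathcal O(\varepsilon^2)$ terms are lower order. Your proposal, as written, does not anticipate this circularity and therefore does not explain how the argument closes.
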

\begin{proof}
We have
$$\|[\triangle A~~\triangle B]\|_F = \|[\triangle A_1+\triangle A_2~~\triangle B_1+\triangle
B_2]\|_F \leq \|[\triangle A_1~~\triangle B_1]\|_F+\|[\triangle
A_2~~\triangle B_2]\|_F.$$ Using the definition of $\triangle A_1$
and $\triangle B_1$, we have the following for $\alpha=1/2.$
$$\|[\triangle A_1~~\triangle B_1]\|_F \leq 1/2 \|F_{mv}\|_F  \Big\|\begin{bmatrix}(\bm\hat{Q}_{k-1}\bm\hat{U}_{k-1,1})^\dag \\ (\bm\hat{Q}_{k-1}\bm\hat{U}_{k-1,2})^\dag\end{bmatrix}\Big\|_2$$
Similarly, using the definition of $\triangle A_2$ and $\triangle
B_2,$ we have
$$\|[\triangle A_2~~\triangle B_2]\|_F \leq 1/2 \|F_1\|_F  \Big\|\begin{bmatrix}(\bm\hat{Q}_{k-1}\bm\hat{U}_{k-1,1})^\dag \\ (\bm\hat{Q}_{k-1}\bm\hat{U}_{k-1,2})^\dag\end{bmatrix}\Big\|_2.$$
Further, by using the equation (\ref{eq21})
we have the following inequality:
$$\Big\|\begin{bmatrix}(\bm\hat{Q}_{k-1}\bm\hat{U}_{k-1,1})^\dag \\
(\bm\hat{Q}_{k-1}\bm\hat{U}_{k-1,2})^\dag\end{bmatrix}\Big\|_2 \leq
\zeta_1\|(\bm\hat{Q}_{k-1}\bm\hat{U}_{k-1})^\dag\|_2 \equiv
\frac{\zeta_1}{\sigma_{min}(\bm\hat{Q}_{k-1}\bm\hat{U}_{k-1})}$$
$$~~~~~~\leq \frac{\zeta_1}{\sigma_{min}(\bm\hat{Q}_{k-1})\sigma_{min}( \bm\hat{U}_{k-1})}  $$
In addition, with the bound for $\|F_{mv}\|_F$ in Lemma-\ref{lem11},
this gives the following upper bound for $\|[\triangle
A_1~~\triangle B_1]\|_F.$
\begin{eqnarray}\label{eq22}\relax
\nonumber |[\triangle A_1~~\triangle B_1]\|_F \leq
\frac{\zeta_12k^2n \|[A~~B]\|_F\|\bm\hat{Q}_{k-1}\|_2
\|\bm\hat{U}_{k-1}\|_2 }{
\sigma_{min}(\bm\hat{Q}_{k-1})\sigma_{min}(
\bm\hat{U}_{k-1})}\varepsilon + \mathcal{O}(\varepsilon^2)\\
 \leq \zeta_1 2k^2n
\|[A~~B]\|_F \mathcal{K}^2
\varepsilon+\mathcal{O}(\varepsilon^2)~~~~~~~~~~~~~~~~~~.
\end{eqnarray}
Now, recall from the equation (\ref{eq17c}) that
$$\bm\hat{\underline{H_k}} = (\bm\hat{Q}_{k}\bm\hat{U}_{k,1})^\dag
([A~~B]+[\triangle A~~\triangle
B])\begin{bmatrix}\bm\hat{Q}_{k-1}\bm\hat{U}_{k-1,1}\\\bm\hat{Q}_{k-1}\bm\hat{U}_{k-1,2}\end{bmatrix},$$
and repeatedly apply the inequality, $\|XY\|_F \leq \|X\|_2\|Y\|_F$
 to obtain the following:
$$\|\bm\hat{\underline{H_k}}\|_F \leq
\|(\bm\hat{Q}_{k}\bm\hat{U}_{k,1})^\dag\|_2\Big\|\begin{bmatrix}\bm\hat{Q}_{k-1}\bm\hat{U}_{k-1,1}\\\bm\hat{Q}_{k-1}\bm\hat{U}_{k-1,2}\end{bmatrix}\Big\|_2(\|[A~~B]\|_F+\|[\triangle
A~~\triangle B]\|_F).$$
Then, use the following inequalities in the
above equation,
$$\|(\bm\hat{Q}_{k}\bm\hat{U}_{k,1})^\dag\|_2 \leq \frac{1}{\sigma_{min}(\bm\hat{Q}_{k})\sigma_{min}(\bm\hat{U}_{k,1}) }~~\mbox{and}~~\Big\|\begin{bmatrix}\bm\hat{Q}_{k-1}\bm\hat{U}_{k-1,1}\\\bm\hat{Q}_{k-1}\bm\hat{U}_{k-1,2}\end{bmatrix}\Big\|_2 \leq
\|\bm\hat{Q}_{k-1}\|_2\|\bm\hat{U}_{k-1}\|_2,$$ to get the 
following:
$$\|\bm\hat{\underline{H_k}}\|_F \leq \frac{\|\bm\hat{Q}_{k-1}\|_2\|\bm\hat{U}_{k-1}\|_2}{\sigma_{min}(\bm\hat{Q}_{k})\sigma_{min}(\bm\hat{U}_{k,1}) }(\|[A~~B]\|_F+\|[\triangle
A~~\triangle B]\|_F).$$ 
By using this equation and the result in the
Lemma-\ref{lem14}, we have
\begin{eqnarray}\label{eq23}\relax
\nonumber\|F_1\|_F\Big\|\begin{bmatrix}(\bm\hat{Q}_{k-1}\bm\hat{U}_{k-1,1})^\dag \\
(\bm\hat{Q}_{k-1}\bm\hat{U}_{k-1,2})^\dag\end{bmatrix}\Big\|_2
~~~~~~~~~~~~~~~~~~~~~~~~~~~~~~~~~~~~~~~~~~~~~~~~~~~~~~~~~~~~~~~~~~~~~~~~~~~~~~~~~~~~~~~~~~~~~~~~~~~~~~~~~~~~
\\ \nonumber \leq
\varphi \frac{\zeta_1
\|\bm\hat{Q}_{k-1}\|_2\|\bm\hat{U}_{k-1}\|_2}{\sigma_{min}(\bm\hat{Q}_{k})\sigma_{min}(\bm\hat{U}_{k,1})
}\frac{\|\bm\hat{Q}_{k}\|_2\|\bm\hat{U}_{k}\|_2}{\sigma_{min}(\bm\hat{Q}_{k-1})\sigma_{min}(\bm\hat{U}_{k-1})
} (\|[A~~B]\|_F+\|[\triangle A~~\triangle B]\|_F)\varepsilon +
\mathcal{O}(\varepsilon^2)~~~~~~~~~~~~~~\\ \nonumber \leq \varphi
(\zeta_1/\zeta_2)
\mathcal{K}_2(\bm\hat{Q}_k)\mathcal{K}_2(\bm\hat{U}_{k})\mathcal{K}_2(\bm\hat{Q}_{k-1})\mathcal{K}_2(\bm\hat{U}_{k-1})(\|[A~~B]\|_F+\|[\triangle
A~~\triangle B]\|_F)\varepsilon +
\mathcal{O}(\varepsilon^2))~~~~~~~~~~~\\
\leq \varphi_2 \mathcal{K}^4 (\|[A~~B]\|_F+\|[\triangle A~~\triangle
B]\|_F)\varepsilon +\mathcal{O}\varepsilon^2)~~~~~~~~~~~~~~~~~~~~~~~~~~~~~~~~~~~~~~~~~~~~~~~~~~~~~~~~~~~~~~~~~~~~~~~~~~~~~
\end{eqnarray}
 where $\varphi_2=
(2k+1)(k+1)\zeta_1/\zeta_2.$ The two inequalities in the
equation (\ref{eq21}) were used for the first and second
inequalities, respectively . Now, by combining the 
equations (\ref{eq22}) and (\ref{eq23}), we have
$$\|[\triangle A~~\triangle B]\|_F \leq \frac{(2\zeta_1 n k^2+(\varphi_2/2) \mathcal{K}^2)\mathcal{K}^2 }{(1-(\varphi_2/2) \mathcal{K}^4  \varepsilon)}\|[A~~B]\|_F \varepsilon+\mathcal{O}(\varepsilon^2). $$
As we assumed $(\varphi_2/2)\mathcal{K}^4 \varepsilon < 1,$ the
theorem is proven by omitting the denominator, since it can be
covered by the term $\mathcal{O}(\varepsilon^2).$
\end{proof}

\ni\textit{Remark-2:} Similar to the Theorem-2.5 in \cite{stabtoar}, the previous lemmas were assumed that the matrices A and B are known explicitly so that the standard error bound for the matrix-vector multiplication applicable. The stability analysis of the Arnoldi method in \citep[Theorem-2.5]{31} also  used the same assumption. Otherwise, an error bound for matrix-vector multiplication depends on the specific formulation of  $A$ and $B.$ 

\section{Numerical examples}
In this section, we apply the I-TOAR procedure to the application of
Model order reduction of second-order dynamical systems. A continuous time invariant dynamical system is of the following form:
\begin{eqnarray}\label{N1}\relax
\nonumber M\ddot{x}(t)=-D\dot{x}(t)-Kx(t)+Fu(t)\\
y(t)= C_px(t)+C_v \dot{x}(t)
\end{eqnarray}
where $M,~D,~K \in \mathcal{R}^{n \times n}$ are mass, damping, and
stiffness matrices, respectively. $F \in \mathcal{R}^{n \times m},$
$C_p, ~C_v \in \mathcal{R}^{q \times n}$ are constant matrices. In
this paper, we are assuming $C_v=0$ and $m=q=1.$ Thus, we considering the following single input-single output dynamical
system of the form:
\begin{eqnarray}\label{N2}\relax
\nonumber M\ddot{x}(t)=-D\dot{x}(t)-K x(t)+fu(t)\\
y(t)= c x(t),
\end{eqnarray}
where $x(t)$ is the state vector, $u(t)$ is the input vector and
$y(t)$ is the output vector. Here, $f$ is a input distribution array
and $c$ is the outer measurement array. Moreover, for the convenience, we are
assuming that $x(0)= \dot{x}(0)=0.$  

Applying the Laplace transform
on both sides of the previous equation will give
\begin{eqnarray}\label{N4}\relax
\nonumber s^2 M X(s)+s DX(s)+K X(s)=fU(s)\\
Y(s)= c X(s).
\end{eqnarray}
where $X(s),Y(s),$ and $U(s)$ are Laplace transforms of $x(t),y(t),$
and $u(t),$ respectively. Thus, we have
$$Y(s) = c (s^2M+sD+K)^{-1}f U(s) .$$
$h(s):=c(s^2M+sD+K)^{-1}f$ is called as the \emph{Transfer function}. Using
the definition of $h(s),$ the previous equation can be written as
$$Y(s) = h(s)X(s).$$
The Model order reduction method produces a lower order dynamical
system that closely resembles the characteristics of the original system.
Though model order reduction is possible in many ways, in this paper, we are using the Galerkin projection based
reduction method. This method defines a projection operator 
using an orthonormal basis of a subspace generated by the mass,
damping and stiffness matrices. Then, it projects the system
(\ref{N4}) onto a subspace of smaller dimension. 
The following is the resulting reduced model of a system in the equation (\ref{N4}):
\begin{eqnarray}\label{new5}\relax
V^\ast (s^2 M V X_k(s)+s D V X_k(s)+K V X_k(s))=V^\ast f U(s)
\end{eqnarray}
The above system will be solved for $X_k(s).$ Note that $X_k(s)$ has a 
lesser number of elements compared to $X(s)$ in (\ref{N4}). Now, the
approximation to $Y(s)$ is given by $$Y_k(s):= h_k(s)X_k(s).$$ Here, 
$h_k(s)$ is an approximation to the Transfer function $h(s)$ and is
given by
$$h_k(s):= c_k (s^2M_k+sD_k+K_k)^{-1}V^\ast f,$$
where $c_k=cV,~M_k=V^\ast MV,~D_k=V^\ast DV $ and $K_k= V^\ast KV.$

The main objective of model order reduction techniques is to compute
$h_k(s),$ as an accurate approximation of $h(s)$ over a wide range
of frequency intervals around a prescribed shift $s_0.$ As in
\cite{stabtoar}, to meet this objective, we rewrite the transfer
function $h(s)$ by including the shift $s_0$ as follows:
$$h(s)= c\big((s-s_0)^2 M+(s-s_0)\tilde{D}+\tilde{K})^{-1}f$$
Using I-TOAR, we compute an orthonormal basis matrix $Q_k \in
\mathcal{R}^{n \times \eta_k}$ of the second order Krylov subspace
$${G}_k(-\tilde{K}^{-1}\tilde{D},-\tilde{K}^{-1}M;0,r_0=\tilde{K}^{-1}f)$$
where $\tilde{D} = 2s_0M+D$ and $\tilde{K}= s_0^2M+s_0D+K.$ Then, we
used  $Q_k$ in place of $V$ in the equation (\ref{new5}) for computing $h_k(s),$
an approximation to $h(s).$

Our numerical experiments compare the accuracy of reduced dynamical systems defined using the orthonormal basis matrices in the TOAR and I-TOAR methods. In both the TOAR and I-TOAR methods, we apply the reorthogonalization to ensure that the computed basis is orthonormal up to the machine precision. 
For I-TOAR, we used the complete reorthogonalization whereas, for the TOAR method 
we used the same setup as in \cite{stabtoar}. All algorithms are implemented using MATLAB and were run on a machine Intel(R) Core(TM)i7-4770
CPU@3.40GHz with 8GB RAM. For the convenience,
we use the same examples as in \cite{stabtoar}. The author
would acknowledge D.LU, an author of \cite{stabtoar} for
providing the data of these examples.
\begin{eg}
This example is a finite element model of a shaft on bearing supports with a
damper in MSC/NASTRAN. It is a second-order system and of dimension
400. The mass, damping matrices are symmetric, and the stiffness matrix
is symmetric positive definite. We use the expansion point $s_0=150\times 2\pi$ to approximate the Transfer function $h(s)$ over the frequency interval $[0,3000].$ 
\end{eg}
\vs{-0.75cm}
\begin{figure}[!htb]
\begin{center}
\includegraphics[width = 5.2in,height=2.7in]{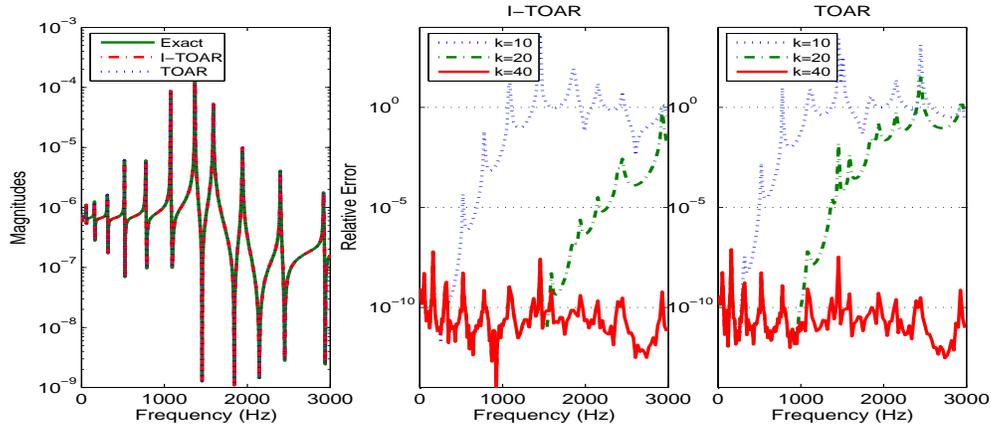}
\vs{-1cm}
\caption{Magnitudes of transfer functions h(s) and $h_k(s)$ with $k
= 40$ (left). Relative errors $|h(s) - h_k(s)|/|h(s)|$ for $k = 10,
20, 40 $(middle and right).}
\end{center}
\vs{-0.5cm}
\end{figure}

The left plot of Figure-1 shows the magnitudes of the transfer
function $h(s)$ of the full-order system, and the transfer functions
$h_k(s)$ of the reduced systems generated by the I-TOAR and TOAR
procedures for $k=40.$ The relative errors of the transfer functions
in the I-TOAR and TOAR procedures for $k=10,20$ and $40$ are shown in
the middle and right plots of the Figure-1  respectively. As we can
see that, the transfer functions $h_k(s)$ in the I-TOAR and TOAR methods
produces almost the same accuracy in the frequency interval
$[0,2000],$ for $k=10,20,$ and $40.$ In the frequency interval
$[2000,3000],$ the transfer function $h_k(s)$ by I-TOAR is a more
accurate approximation than the one produced by the TOAR method. Like the
TOAR method, in the I-TOAR method also, the approximation accuracy of
$h_k(s)$ is improved, when increasing $k$ from $10$ to $40.$
\begin{eg}
This example is the butterfly gyroscope problem from the Oberwolfach
collection. The full dynamical system is of the order $n=17361.$ The
mass and stiffness matrices $M$ and $K$ are symmetric. The damping matrix is of the form $D=\alpha M+\beta K.$ This second-order system
have 1 input vector and 12 output vectors. Following the experiments in \cite{stabtoar}, we considered the first output vector as the output
vector 'c.' The damping parameters $\alpha$ and $\beta$ are chosen
same as in \cite{stabtoar,li}, $\alpha = 0$ and $\beta = 10^{-7}.$
The expansion point $s_0$ also same as in \cite{stabtoar}, $s_0=1.05
\times 10^5.$
\end{eg}
\begin{figure}[!htb]
\begin{center}
\includegraphics[width = 5.2in,height=2.3in]{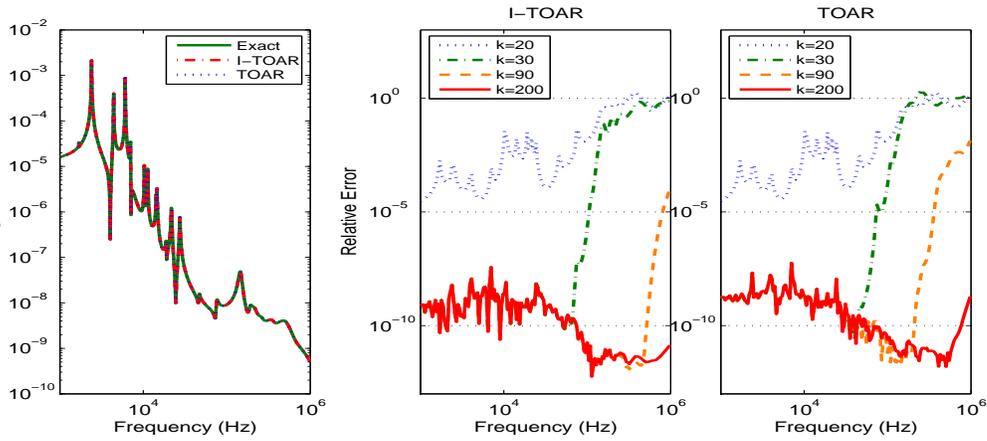}
\vs{0.5cm}\caption{Fig. 2. Magnitudes of transfer functions h(s) and
$h_k(s)$ with $k = 200$ (left). Relative errors $|h(s) -
h_k(s)|/|h(s)|$ for $k = 20, 30, 90,$ and $200$ (middle and right).}
\end{center}
\vs{-0.7cm}
\end{figure}

The magnitudes of the transfer functions shown in the left plot
of the Figure-2. The relative errors of transfer functions in I-TOAR and
TOAR show in the middle and right plots of the Figure-2. From
the figure, it is easy to observe the advantage of I-TOAR over
TOAR for the frequency range of $10^5-10^6 Hz$ for $k=90$ and $200.$
For $k=20$ and $30$, both I-TOAR and TOAR produced nearly the
same accuracy. We have observed the stagnation in both TOAR and
I-TOAR from $k=200$ onwards. From the middle and right plots of the
Figure-2, it is clear that the transfer function in I-TOAR is more
accurate than the transfer function in the TOAR procedure. Moreover, for
this example, we found that the quantity $\zeta_1$ of
Theorem-\ref{thm1} is of order $10^{11}$ and $\zeta_2=
1.000000000001526e+000.$ Further, found that condition number of the
computed matrices $\bm\hat{Q}_k$ and $\bm\hat{U}_k$ are equal to
$1+\mathcal{O}(\varepsilon),$ and these quantities satisfy the
condition $(\varphi_2 /2)\mathcal{K}^4 \varepsilon < 1.$

\section{Conclusion}
In this paper, we have proposed a new TOAR procedure.  It imposes an extra condition on the orthogonality of the matrices in the second-level orthogonalization of TOAR. Imposing the new condition gives
orthonormal basis of an associated linear Krylov subspace without any extra computation. A rigorous stability analysis has done on the proposed method. The backward analysis is in terms of the matrix $[A~~B]$ of the quadratic problem. It has shown that in the proposed method the second-order Krylov subspace of $[A~~B]$ embedded in that of $[A+\triangle A~~B+\triangle B]$ for sufficiently small $\| [\triangle A~~\triangle B]\|.$ This
problem was left open for TOAR in \cite{stabtoar}. Numerical experiments have shown that the basis matrices in I-TOAR are as accurate as ones in TOAR in the application of dimension reduction in second-order dynamical systems. The method proposed in this paper may help us to improve the methods for solving polynomial eigenvalue problems.


\section*{Acknowledgements}
The author
would like to acknowledge D.LU, author of \cite{stabtoar} for
providing the data used in the numerical examples. Further, this work was
supported by the National Board of Higher Mathematics, India under
Grant number 2/40(3)/2016/R\&D-II/9602.

\bibliographystyle{IMANUM-BIB}
\bibliography{ITOAR-IMANUMR1-refs}

\clearpage

\end{document}